\newtheorem{theorem}{Theorem}[section]
\newtheorem{definition}{Definition}[section]
\newtheorem{proposition}{Proposition}[section]
\newtheorem{lemma}{Lemma}[section]
\newtheorem{remark}{Remark}[section]
\newtheorem{corollary}{Corollary}[section]
\title{\textsc{On density of infinite subsets II: dynamics on homogeneous spaces}}
\author{\textsc{Changguang Dong\footnote{2010 Mathematics Subject Classification. Primary 11B05, 37C20.}}}
\date{}
\begin{document}
\maketitle
\begin{abstract}
Let $G$ be a noncompact semisimple Lie group, $\Gamma$ be an irreducible cocompact lattice in $G$, and $P<G$ be a minimal parabolic subgroup. We consider the dynamics of $P$ acting on $G/\Gamma$ by left translation. For any infinite subset $A\subset G/\Gamma$, we show that, for any $\epsilon>0$, there is a $g\in P$ such that $gA$ is $\epsilon$-dense. We also prove a similar result for certain discrete group actions on $\mathbb T^n$.
\end{abstract}

\section{Introduction and Results}

In this note, we make further progress on density of infinite subset initiated in \cite{do1}. We will in particular focus on the D.I. problem.

To be more precise, let $Y$ be a compact metric space, and $G$ be a locally compact second countable topological (semi-)group which acts on $Y$ by homeomorphisms. Let $A$ be an infinite subset of $Y$, we can consider the set containing all subsets of the form $gA:=\{\alpha(g)x|x\in A\}$ for a $g\in G$. For the fixed $A$, we would like to know: for any $\epsilon>0$, whether there exists a $g\in G$ such that $gA$ is $\epsilon$-dense in $Y$, or equivalently $d^H_Y(gA,Y)<\epsilon$. We call this dense iteration problem simply {\bf D.I. problem}. 

Here is a nontrivial result in this direction. Let $S^1=\mathbb R/\mathbb Z$ be the standard circle, and $T_\alpha:S^1\to S^1$ be the translation map: $x\mapsto x+\alpha\;(mod\, 1)$. A theorem of Glasner \cite{gl} asserts that if $X$ is an infinite subset of $S^1$, then for any $\epsilon>0$, there exists an integer $n$ such that the dilation $nX := \{nx\; (mod\, 1) : x\in X\}$ is $\epsilon$-dense. This gives an affirmative answer to the D.I. problem in the case of the natural action by multiplication of $\mathbb N$ on the circle $S^1$. 

In view of this result, we made the following definitions in \cite{do1}.

\begin{definition}
Given a $G$ action on a metric space $Y$, if an infinite subset $A$ satisfies that for any $\epsilon>0$, there exists a $g\in G$ such that $gA$ is $\epsilon$-dense in $Y$, then $A$ is called {\bf Glasner set} with respect to $(Y,d,G)$.
\end{definition}

\begin{definition}
Given a $G$ action on a metric space $Y$, if any infinite subset $A$ is a Glasner set, then we say the dynamical system $(Y,d,G)$ has {\bf Glasner property}.
\end{definition}

Using our definition, the system $(S^1,d_L,\mathbb N)$ has the Glasner property. We also proved in \cite{do1} that for any positive integer $N\ge 2$, the system $(\mathbb T^N,d_L,SL(N, \mathbb Z))$ has Glasner property. 

In this note, we consider ``large" group acting on homogeneous spaces. Recall that, a subgroup $F$ of a real algebraic group $G$ is called {\bf epimorphic} in $G$ if any $F$-fixed vector is also $G$-fixed for any finite dimensional algebraic linear representation of $G$. As an example, the parabolic group of a semisimple real Lie group without compact factor is epimorphic. Our first result is

\begin{theorem}\label{t1}
Let $G$ be a connected semisimple real Lie group with trivial center and no compact factor, $\Gamma$ be an irreducible cocompact lattice in $G$, and $P<G$ be an epimorphic subgroup. Consider $P$ acting on $G/\Gamma$ by left translation. Then $(G/\Gamma, d, P)$ has Glasner property.
\end{theorem}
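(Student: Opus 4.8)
\emph{Proof proposal.} The plan is to reduce the statement, by means of the infinitude of $A$, to a density statement for the diagonal action of $P$ on a finite power $(G/\Gamma)^k$, and to establish the latter using Ratner's orbit closure theorem together with the known dynamics of epimorphic subgroups on homogeneous spaces.

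First I would make two elementary reductions. Since a set containing an $\epsilon$-dense set is itself $\epsilon$-dense, $A$ is a Glasner set as soon as some infinite subset $A'\subset A$ is; and since $G/\Gamma$ is compact we may take $A'=\{a_1,a_2,\dots\}$ to be a sequence of pairwise distinct points converging to some $a_\infty\in G/\Gamma$. Now fix $\epsilon>0$ and choose an $\epsilon/2$-net $y_1,\dots,y_k$ of $G/\Gamma$. It suffices to find distinct indices $i_1,\dots,i_k$ and $g\in P$ with $d(g\,a_{i_j},y_j)<\epsilon/2$ for every $j$, since then $gA'\supset\{g\,a_{i_1},\dots,g\,a_{i_k}\}$ is $\epsilon$-dense; and such a $g$ exists whenever the diagonal $P$-orbit of $(a_{i_1},\dots,a_{i_k})$ is dense in $(G/\Gamma)^k$. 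Thus the theorem reduces to: \emph{for every $k$, the set $A$ contains $k$ distinct points whose diagonal $P$-orbit is dense in $(G/\Gamma)^k$.}

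The dynamical input is the following. First, $P$ acts minimally on $G/\Gamma$, because an epimorphic subgroup of $G$ acts minimally on every compact homogeneous space $G/\Gamma$ (a theorem of Shah and Weiss). Second, the diagonally embedded copy of $P$ in $G^k$ is epimorphic in the diagonally embedded copy of $G$, which — $G$ being semisimple with trivial center and no compact factor — is generated by one-parameter unipotent subgroups; hence, again by the results of Shah and Weiss on epimorphic actions, the closure of a diagonal $P$-orbit in $(G/\Gamma)^k$ equals the closure of the corresponding diagonal $G$-orbit, which by Ratner's orbit closure theorem is a homogeneous subspace. It then remains to decide when $\overline{G\cdot(x_1,\dots,x_k)}=(G/\Gamma)^k$ for the diagonal action. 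Using that $\Gamma$ is irreducible and that $G$ has trivial center and no compact factor, one can classify the homogeneous closures that occur and conclude that the diagonal $G$-orbit of $(x_1,\dots,x_k)$ is dense \emph{unless} two of the points are commensurability related, i.e. lie in a common orbit of $\mathrm{Comm}_G(\Gamma)$ — equivalently, for some $i\neq j$ the orbit closure $\overline{G\cdot(x_i,x_j)}$ is a proper, ``skew-diagonal'', subvariety of $(G/\Gamma)^2$.

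Since $\mathrm{Comm}_G(\Gamma)\cdot x$ is a countable set for each $x$, the commensurability relation partitions $G/\Gamma$. If $A'$ meets infinitely many classes, we take $a_{i_1},\dots,a_{i_k}$ in $k$ distinct classes; being pairwise unrelated, they have a dense diagonal $P$-orbit, and we are done. \textbf{The remaining case, which I expect to be the main obstacle,} is when — after passing once more to an infinite subset — $A'$ lies in a single commensurability class, say $A'=\{c_i\,x_0\}$ with $c_i\in\mathrm{Comm}_G(\Gamma)$. Then every finite sub-tuple of $A'$ has a proper orbit closure of dimension $\dim(G/\Gamma)$, so one cannot spread a bounded number of points simultaneously and a genuinely different argument is required. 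Here I would separate cases according to arithmeticity: in the arithmetic case $\mathrm{Comm}_G(\Gamma)$ is dense in $G$, so $A'$ is a (possibly sparse) infinite subset of a dense orbit, and I would try to spread its infinitely many points directly, renormalizing near $a_\infty$ and using that $P$ — being epimorphic, hence Zariski dense — contains elements whose adjoint action on $\mathfrak g$ is expanding; in the non-arithmetic case $\mathrm{Comm}_G(\Gamma)$ is itself a lattice, which rigidly constrains the configuration and should make this case comparatively easy. Combining the resolution of this single-class case with the product argument above would complete the proof.
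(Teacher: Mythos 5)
Your first half is sound and tracks the paper closely: the reduction to finite tuples, the use of Shah--Weiss to replace $P$-orbit closures by $G$-orbit closures in $(G/\Gamma)^k$, and the classification (via commensurators) saying that a tuple of pairwise non-$\sim$-related points has full diagonal $G$-orbit closure. The ``infinitely many commensurability classes'' case is handled the same way the paper handles it.

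The genuine gap is in the single-class case, and it begins earlier than you think: your reduction is too strong. You pass from ``for each target open box $U=\prod_j B(y_j,\epsilon/2)$ there is a tuple from $A$ whose diagonal $P$-orbit \emph{meets} $U$'' to ``$A$ contains a tuple whose diagonal $P$-orbit is \emph{dense} in $(G/\Gamma)^k$.'' The second statement is simply false whenever $A$ lies in one commensurability class: every $\ell$-tuple then has a proper (``graph'') homogeneous closure of dimension $\dim G/\Gamma$, so no tuple has dense diagonal orbit, no matter how large $\ell$ is. The correct move is to keep the weaker claim and let the choice of tuple \emph{depend on} the target open set $U$. The paper does exactly this (its Lemma 5.1): fix $U=U_1\times\cdots\times U_\ell$, build the tuple inductively one coordinate at a time; at stage $k$ the closure of the $G$-orbit of $(b_1,\dots,b_{k-1})$ is homogeneous with stabilizer a conjugate of a finite-index subgroup $\Gamma_{k-1}\le\Gamma$, and by Benoist--Quint any sequence of distinct finite $\Gamma_{k-1}$-orbits converges to $G/\Gamma$ in Hausdorff distance, so among the infinitely many remaining points of $A$ (all of which have finite $\Gamma_{k-1}$-orbit, being commensurability points) one has an orbit that meets $U_k$. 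That produces $g\in G$ with $g\cdot(b_1,\dots,b_\ell)\in U$, and then Shah--Weiss again upgrades $g\in G$ to $g\in P$ because $U$ is open and the $P$- and $G$-orbit closures coincide.

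Your sketch for the hard case (``renormalizing near $a_\infty$,'' ``$P$ contains expanding elements'') is not an argument and I don't see how to make it one; it is qualitatively different from what is needed, which is an equidistribution statement for a \emph{sequence} of distinct finite orbits, not expansion of a single small neighborhood. Two further remarks. First, you should not need to pass to a convergent subsequence at the start; compactness is already used in choosing the $\epsilon/2$-net, and convergence of $A'$ plays no role afterwards. Second, your arithmetic/non-arithmetic split is miscalibrated: when $\Gamma$ is non-arithmetic, $\mathrm{Comm}(\Gamma)$ contains $\Gamma$ with finite index, so each commensurability class in $G/\Gamma$ is \emph{finite} and an infinite $A$ cannot lie in a single class — the single-class case is vacuous there, not ``comparatively easy.'' The hard case is precisely the arithmetic one, and the Benoist--Quint equidistribution of finite orbits is the tool that handles it.
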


Here, a lattice $\Gamma$ in a connected semisimple Lie group $G$ with finite center is {\it irreducible} if the projection of $\Gamma$ to $G/H$ is dense for every nontrivial connected normal subgroup $H\le G$.

Our second result is a generalization of \cite[Theorem 1.1]{do1}.

\begin{theorem}\label{t2}
Let $n\ge 2$, and $\Gamma$ be a subgroup of $GL(n,\mathbb Z)$. Assume the Zariski closure of $\Gamma$ is semisimple, Zariski connected and with no compact factor, and acts irreducibly on $\mathbb Q^n$. Then the system $(\mathbb T^n,d_L,\Gamma)$ has Glasner property.
\end{theorem}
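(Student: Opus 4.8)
The plan is to reduce the Glasner property to a statement about finite point sets and then to establish that statement via effective equidistribution for $\Gamma$-random walks on the torus. Fix an infinite $A\subset\mathbb T^n$ and $\epsilon>0$. Since $gA\supseteq gF$ for any $F\subseteq A$ and a superset of an $\epsilon$-dense set is $\epsilon$-dense, it suffices to produce a finite $F\subseteq A$ (whose cardinality and arithmetic we may prescribe in terms of $\epsilon$) and a $g\in\Gamma$ with $gF$ $\epsilon$-dense. Choose a finitely supported symmetric probability measure $\mu$ on $\Gamma$ whose support generates $\Gamma$. Because $\overline{\Gamma}^{\mathrm{Zar}}$ is semisimple with no compact factor, the theorem of Bourgain--Furman--Lindenstrauss--Mozes (and its refinements) applies: for every $x\in\mathbb T^n$ that is not \emph{non-generic}, i.e.\ not abnormally close to a rational point of denominator below an explicit threshold $Q=Q(\epsilon)$, the measures $\mu^{*k}*\delta_x$ converge to Lebesgue measure with an effective rate in $k$ (this covers simultaneously Diophantine-generic irrational points and rational points of large denominator, for which the input is expansion in the congruence quotients of $\Gamma$). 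The same theorem applied to the diagonal action of $\Gamma$ on $\mathbb T^n\times\mathbb T^n$ --- whose acting group again has semisimple Zariski closure with no compact factor --- gives effective equidistribution of $\mu^{*k}*\delta_{(x,y)}$ toward Lebesgue measure on $\mathbb T^{2n}$, the non-generic pairs now being those trapped near one of the finitely many proper $\Gamma$-invariant affine subtori of bounded height; since $\mathbb Q^n$ is $\Gamma$-irreducible these subtori are governed by the division algebra $D=\mathrm{End}_\Gamma(\mathbb Q^n)$ and, up to a bounded rational translation, are of the form $\{x\ \text{bad rational}\}$, $\{y\ \text{bad rational}\}$, or a graph-type subtorus determined by a bounded-height element of $D$.

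I would then extract the configuration. Fix an $\epsilon/2$-net $y_1,\dots,y_K$ of $\mathbb T^n$ (so $K\asymp\epsilon^{-n}$) and let $N_0=N_0(\epsilon)$ be large enough --- polynomial in $1/\epsilon$ --- for the second-moment estimate below to beat the union bound over the $K$ net balls. Call a point of $\mathbb T^n$ \emph{generic} if it is not one of the finitely many rational points of denominator $\le Q$, and an ordered pair \emph{generic} if it lies on none of the finitely many bounded-height proper $\Gamma$-invariant affine subtori of $\mathbb T^{2n}$ above. Deleting from $A$ the finitely many non-generic points leaves an infinite set $A'$, and on $A'$ the relation ``$(a,a')$ is non-generic'' defines a graph of \emph{bounded degree}: for each of the finitely many admissible endomorphism-translation data there are only boundedly many $a'$ linked to a given $a$. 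A bounded-degree graph on an infinite vertex set contains independent sets of every finite size, so I can choose $F=\{a_1,\dots,a_{N_0}\}\subseteq A$ consisting of generic points with every pair generic.

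For the probabilistic step, fix $k$ large --- this may depend on $F$ --- so that each $a_i$ and each pair $(a_i,a_j)$ is, at scale $\epsilon$, equidistributed at time $k$; this is legitimate precisely because, each $a_i$ and each $(a_i,a_j)$ being generic, the relevant Diophantine obstructions are cleared once $k$ is large enough. Draw $g$ with law $\mu^{*k}$ and, for each net ball $B_j=B(y_j,\epsilon/2)$, set $Z_j=\#\{i:ga_i\in B_j\}$. By the single-point equidistribution, $\mathbb E_g[Z_j]=\sum_i(\mu^{*k}*\delta_{a_i})(B_j)\ge c\,N_0$ for a constant $c\asymp\epsilon^n$ once $k$ is large, while the pairwise equidistribution gives $\mathrm{Var}_g[Z_j]\le c'N_0+o(1)\,N_0^2$ with $o(1)\to 0$ as $k\to\infty$. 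Chebyshev's inequality yields $\mathbb P_g[Z_j=0]\le \mathrm{Var}_g[Z_j]/\mathbb E_g[Z_j]^2\ll (cN_0)^{-1}+o(1)$, and summing over $j$ gives $\mathbb P_g[\,gF\text{ not }\epsilon\text{-dense}\,]\le\sum_j\mathbb P_g[Z_j=0]<1$ once $N_0$ was chosen large enough and $k$ large. Hence some $g$ in the finite support of $\mu^{*k}$, in particular some $g\in\Gamma$, has $gF$, and therefore $gA$, $\epsilon$-dense; as $A$ and $\epsilon$ were arbitrary, $(\mathbb T^n,d_L,\Gamma)$ has the Glasner property.

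The main obstacle is the arithmetic bookkeeping around the non-generic configurations. Two points demand care. First, extracting from the equidistribution machinery exactly the quantified statements used above --- especially on the \emph{reducible} space $\mathbb T^{2n}$, where one must correctly list all proper $\Gamma$-invariant affine subtori of bounded height in terms of $\mathrm{End}_\Gamma(\mathbb Q^n)$, verify that the non-generic locus is the finite union of proper affine subtori claimed, and control uniformly in $k$ the discretization errors produced by the finite orbits of rational points (this is where expansion in congruence quotients, hence the hypothesis that $\overline{\Gamma}^{\mathrm{Zar}}$ is semisimple, enters essentially). Second, the combinatorial extraction of a generic configuration: one must confirm that the ``non-generic pair'' graph genuinely has bounded degree, so that $N_0$ mutually generic points survive inside the \emph{given} infinite set $A$. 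These structured configurations --- rationals of moderate denominator and points linked by a $\Gamma$-endomorphism --- are exactly where the random walk fails to equidistribute, so disposing of them correctly is the heart of the matter. (When $\Gamma$ is itself a lattice in a semisimple group one could instead suspend the action to a homogeneous space and invoke Theorem~\ref{t1}; the point of the present random-walk approach is that it also covers thin $\Gamma$.)
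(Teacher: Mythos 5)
Your approach is genuinely different from the paper's. The paper works purely topologically: it invokes the Benoist--Quint orbit closure classification (Theorems \ref{bq2-0}, \ref{bq2}), splits the infinite set $A$ according to the $\mathbb Q$-linear rank $r(A)$ of its points, and in each case directly constructs, by an inductive open-set argument (Lemmas \ref{tl1}, \ref{tl3}, \ref{k4}), a single $\gamma\in\Gamma$ placing an $\ell$-tuple of points of $A$ into a prescribed open set of $(\mathbb T^n)^\ell$; the key structural input for the hardest case is the centralizer computation Lemma \ref{to1} and assumption \textcircled{3}, which guarantee that the denominators appearing in the affine orbit-closure maps can be made large. You instead run the Alon--Peres/Kelly--L\^e style second-moment argument on top of quantitative equidistribution of $\mu^{*k}*\delta_x$. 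Conceptually, the high-denominator manipulations in the paper's Case 3 are precisely the topological shadow of your statement that Haar measure on a high-height invariant subtorus is $\epsilon$-indistinguishable from Lebesgue, so the two proofs are pursuing the same phenomenon by different means. Your route, if completed, would in principle give effective bounds, a feature the paper's proof does not attempt.

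That said, there are real gaps, some of which you flag and some you do not. (i) BFLM as stated requires strong irreducibility; the diagonal action of $\Gamma$ on $\mathbb T^{2n}$ is reducible (its invariant $\mathbb Q$-subspaces are exactly the graphs $\{(v,\lambda v)\}$, $\lambda\in\mathbb Q$, plus the two factors), so the clean statement you want on $\mathbb T^{2n}$ is not in BFLM itself but in its non-irreducible extensions, which are substantially more delicate and whose exceptional-set description is not simply ``finitely many bounded-height subtori''; you should either cite those extensions precisely or fall back on the qualitative Benoist--Quint equidistribution of $\mu^{*k}*\delta_{(x,y)}$ toward Haar on the orbit closure, which does apply verbatim and already suffices since you allow $k$ to depend on $F$. (ii) The ``generic pair'' relation must be ``lies on'' a low-height invariant affine subtorus, not ``lies near'': with the ``near'' version the set of $a'$ linked to a given $a$ is a union of small balls, into any one of which an infinite set $A$ may accumulate, and the bounded-degree claim fails; with the ``on'' version the bounded-degree claim is correct (by pigeonholing over the finitely many $(b,c,q)$ of bounded height and noting $ca'=ba-q$ has at most $c^n$ solutions), but then you must use the non-uniform qualitative equidistribution rather than a uniform BFLM scale. (iii) ``Bounded height'' must be measured on the \emph{reduced} data: the relevant quantity is the pair of coprime integers $(b',c')$ together with the denominator of the translation $q$ \emph{and} the size of the finite $\Gamma$-orbit of $q$; only if one of these is large is the Haar measure on the orbit closure $\bigcup_h(\phi_{b',c'}(\mathbb T^n)+(0,h))$ close to Lebesgue at scale $\epsilon$, and the case where $(b',c')$ is small but $q$ has large denominator genuinely needs equidistribution of the finite $\Gamma$-orbits of rationals (the second item in Theorem \ref{bq2}, or equivalently congruence-quotient expansion), which you mention only in passing. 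These are all fillable but are the actual content of the argument, and the paper's route through explicit orbit-closure affine maps sidesteps the need for any spectral gap beyond what is packaged in Theorem \ref{bq2}.
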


\begin{remark}
A particular case of Theorem \ref{t2} is when $\Gamma<SL(n,\mathbb Z)$ is Zariski dense in $SL(n,\mathbb R)$.
\end{remark}

\begin{remark}
We think, a version of Theorem \ref{t2} can be obtained similarly for certain groups acting on nilmanifold.
\end{remark}

Our main ingredient is the classification of orbit closure of certain group action. We heavily use the orbit closure results in \cite{bq},\cite{sw}.

\section{Facts from homogeneous dynamics}

\subsection{Orbit closure}

The action of epimorphic subgroups on homogeneous spaces is well understood either in the case of invariant measure classification \cite{mo} or in the case of orbit closure \cite{sw}. Here we will use the result on orbit closure.

\begin{theorem}[Corollary 1.3,\cite{sw}]\label{sw1}
Let $F < G < L$ be an inclusion of connected real algebraic groups such that $F$ is epimorphic in $G$. Then any closed $F$-invariant subset in $L/\Lambda$ is $G$-invariant, where $\Lambda$ is a lattice in $L$.
\end{theorem}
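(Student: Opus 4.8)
The plan is to reduce the statement to the orbit-closure equality $\overline{Fx}=\overline{Gx}$ for every $x\in L/\Lambda$. Granting that, if $C\subset L/\Lambda$ is closed and $F$-invariant and $x\in C$, then $\overline{Fx}\subseteq C$, so $Gx\subseteq\overline{Gx}=\overline{Fx}\subseteq C$; letting $x$ range over $C$ shows $C$ is $G$-invariant. Since $F<G$ the inclusion $\overline{Fx}\subseteq\overline{Gx}$ is immediate, so the whole task is the reverse inclusion. I write $Y=\overline{Fx}$; it is closed with $FY=Y$, and I must prove $Y$ is $G$-invariant.

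Two inputs feed the argument. The first is the algebraic content of epimorphicity: beyond the fixed-vector property in the definition, $F$ being epimorphic in $G$ is equivalent (Bien--Borel) to the statement that in every algebraic action of $G$ on an affine variety $V$, every point satisfies $\overline{Fv}^{\,\mathrm{Zar}}=\overline{Gv}^{\,\mathrm{Zar}}$; equivalently, every Zariski-closed $F$-invariant subvariety of such a $V$ is automatically $G$-invariant — an exact algebraic shadow of what we want. The second is that a proper epimorphic subgroup is never reductive (Bien--Borel), so $F$ has nontrivial unipotent radical and in particular contains one-parameter unipotent subgroups; this is what puts Ratner theory and the Dani--Margulis nondivergence and linearization technique at our disposal. (When $F=G$ the theorem is trivial, so we may assume $F\subsetneq G$.)

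The dynamical core then goes as follows. Applying nondivergence and Ratner's orbit-closure theorem to a one-parameter unipotent subgroup of $F$ produces, inside $Y$, a closed finite-volume homogeneous orbit $Hz$ with $H$ in the Ratner collection $\mathcal H$ of subgroups whose intersection with $\Lambda$ is a lattice. I would then bring in Mozes's theorem \cite{mo}, that every $F$-invariant probability measure on $L/\Lambda$ is already $G$-invariant since $F$ is epimorphic, together with linearization: associate to $Y$ and to each relevant $H\in\mathcal H$ a finite-dimensional algebraic representation of $L$ (for instance an exterior power of its Lie algebra) with a distinguished vector whose $L$-orbit is locally closed, so that the way $Y$ clusters around the periodic orbit $Hz$ is recorded by a Zariski-closed $F$-invariant subset of that representation. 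By the algebraic statement above this subset is $G$-invariant, which forces $Y$ to be $G$-invariant in a neighborhood of $Hz$; recurrence of the unipotent flow inside $Y$ then shows such points $z$ are dense in $Y$, and hence $Y$ is $G$-invariant, i.e.\ $Y=\overline{Gx}$.

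The main obstacle is the absence of compactness: $L/\Lambda$ is only assumed to carry a lattice, so $Y$ need not support an $F$-invariant probability measure or contain a minimal set, and the real work is to extract a finite-volume homogeneous piece of $Y$ from the unipotent directions of $F$ and then spread $G$-invariance across all of $Y$ by linearization. A second difficulty is that $F$ generically has non-amenable reductive directions to which unipotent-flow rigidity does not apply directly, so the passage from the Zariski-algebraic form of epimorphicity to the genuine real orbit closure $\overline{Fx}$ must be engineered carefully — this is exactly where the Bien--Borel structure theory of epimorphic subgroups is indispensable.
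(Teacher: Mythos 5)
You should first note that the paper itself offers no proof of this statement: it is quoted verbatim as Corollary 1.3 of Shah--Weiss \cite{sw}, so the only meaningful benchmark is their argument, and your outline does correctly name the toolbox they use (Mozes's theorem \cite{mo}, Ratner's theorems, Dani--Margulis nondivergence and linearization, and the Bien--Borel structure theory of epimorphic subgroups). However, the pivotal ingredient you rely on is false as stated. Epimorphicity of $F$ in $G$ is \emph{not} equivalent to the assertion that in every affine algebraic $G$-variety the Zariski closures of $F$-orbits and $G$-orbits coincide, nor to the assertion that Zariski-closed $F$-invariant subvarieties are automatically $G$-invariant. A Borel (or any proper parabolic) subgroup is epimorphic, yet for $F=B$ the upper triangular subgroup of $G=SL(2,\mathbb R)$ acting on $V=\mathbb R^2$, the line $\mathbb R e_1$ is Zariski closed and $B$-invariant but not $G$-invariant, and $\overline{Be_1}^{\,\mathrm{Zar}}$ is that line while $\overline{Ge_1}^{\,\mathrm{Zar}}=\mathbb R^2$. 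Epimorphicity only controls \emph{fixed vectors} (and, in the refined form actually used by Shah--Weiss, vectors whose $F$-orbit is relatively compact, which are then forced to be $G$-fixed); it says nothing about general invariant subvarieties. Since your entire dynamical step --- ``the way $Y$ clusters around $Hz$ is recorded by a Zariski-closed $F$-invariant subset of a representation, hence $G$-invariant, hence $Y$ is $G$-invariant near $Hz$'' --- is powered by this false equivalence, that step collapses.

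Beyond that, the remaining assertions are placeholders rather than arguments: it is not explained what ``$Y$ is $G$-invariant in a neighborhood of $Hz$'' means for a set that is not yet known to be $G$-invariant, why linearization produces a Zariski-closed (rather than merely closed, or merely bounded) $F$-invariant set of vectors attached to $Y$, or why density in $Y$ of points on closed finite-volume homogeneous orbits would globalize local information to $G$-invariance of all of $Y$. The genuine content of \cite{sw} lies exactly in these places: a correct linear-algebraic lemma about epimorphic subgroups (compact $F$-orbits in representations consist of $G$-fixed vectors), its combination with linearization to control returns of unipotent trajectories in $\overline{Fx}$ near lower-dimensional homogeneous sets, and an inductive scheme to upgrade the measure statement of \cite{mo} to a topological one. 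As written, your proposal substitutes an incorrect algebraic principle for that work, so it does not constitute a proof of the theorem.
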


Hence we have the following

\begin{corollary}\label{co1}
Let $G, \Gamma,P$ be given as in Theorem \ref{t1}. For any integer $k$,  consider the $P$ (or $G$) action on $(G/\Gamma)^k$ defined by $g(x_1,\ldots,x_k)=(gx_1,\ldots,gx_k)$ for $g\in P$ (or $G$) and $(x_1,\ldots,x_k)\in (G/\Gamma)^k$. Then for any $\bar x\in (G/\Gamma)^k$, the closure of $P$ orbit of $\bar x$ coincides with the closure of $G$ orbit of $\bar x$.
\end{corollary}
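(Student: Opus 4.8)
The plan is to deduce Corollary \ref{co1} directly from Theorem \ref{sw1} by passing to the product space via the diagonal embedding. Concretely, I would write $L:=G^k$, $\Lambda:=\Gamma^k$, and identify $(G/\Gamma)^k$ with $L/\Lambda$ in the obvious way. Let $\Delta\colon G\to G^k$, $\Delta(g)=(g,\dots,g)$, be the diagonal homomorphism; then the $P$-action (resp. $G$-action) on $(G/\Gamma)^k$ considered in the statement is precisely left translation by the subgroup $\Delta(P)$ (resp. $\Delta(G)$) of $L$. So it suffices to show that every closed $\Delta(P)$-invariant subset of $L/\Lambda$ is $\Delta(G)$-invariant, and then apply this to $\overline{P\bar x}$.

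To invoke Theorem \ref{sw1} with the chain $\Delta(P)<\Delta(G)<L$, I need to check three things. First, $\Lambda=\Gamma^k$ is a lattice in $L=G^k$; this is immediate, since $\Gamma$ is a (cocompact) lattice in $G$, so $\Gamma^k$ is a (cocompact) lattice in $G^k$. Second, $L=G^k$, $\Delta(G)$ and $\Delta(P)$ should all be connected real algebraic groups with $\Delta(P)$ epimorphic in $\Delta(G)$: the diagonal map $\Delta$ is an isomorphism of $G$ onto the closed algebraic subgroup $\Delta(G)\le G^k$, carrying $P$ to an isomorphic copy $\Delta(P)\le\Delta(G)$, and epimorphicity is preserved under isomorphism, since a finite-dimensional algebraic representation of $\Delta(G)$ pulls back along $\Delta$ to one of $G$ with the same fixed vectors. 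Thus $\Delta(P)$ is epimorphic in $\Delta(G)$ because $P$ is epimorphic in $G$. Here one uses the standing hypotheses on $G$ (center-free, no compact factor) in order to regard $G$, and hence $G^k$, as a connected real algebraic group in the sense of \cite{sw}.

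Granting this, Theorem \ref{sw1} applies: the orbit closure $\overline{P\bar x}=\overline{\Delta(P)\bar x}$ is a closed $\Delta(P)$-invariant subset of $L/\Lambda$, hence $\Delta(G)$-invariant. In particular it contains the full diagonal $G$-orbit $G\bar x$, and, being closed, it contains $\overline{G\bar x}$. The reverse inclusion $\overline{P\bar x}\subseteq\overline{G\bar x}$ is trivial because $P\subseteq G$. Therefore $\overline{P\bar x}=\overline{G\bar x}$, as claimed.

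A word on where the difficulty lies: once Theorem \ref{sw1} is available the argument is essentially a one-line reduction, so there is no substantial analytic obstacle. The only point requiring care is the algebraic-group bookkeeping — checking that the ambient groups $G$ and $G^k$ genuinely satisfy the hypotheses of \cite{sw} (connectedness as real algebraic groups, which is exactly why Theorem \ref{t1} assumes trivial center and no compact factor) and that the diagonal embedding is a morphism of algebraic groups, so that epimorphicity transfers. Note also that irreducibility of $\Gamma$ is not used in this corollary; it enters only later.
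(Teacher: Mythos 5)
Your proof is correct and takes essentially the same approach as the paper: the paper's own proof is the one-line reduction "Apply Theorem \ref{sw1} with $L=G^k$, $F=P$ and $\Lambda=\Gamma^k$," where $F$ and the intermediate group are understood to be the diagonal copies $\Delta(P)<\Delta(G)<G^k$. You have simply made explicit the diagonal-embedding bookkeeping and the transfer of epimorphicity that the paper leaves implicit.
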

\begin{proof}
Apply Theorem \ref{sw1} with $L=G^k$, $F=P$ and $\Lambda=\Gamma^k$, the result follows.
\end{proof}

\subsection{Commensurability group of $\Gamma$}

Let $\gamma\in G$, $\gamma$ is an element of the {\it commensurator} of $\Gamma$ in $G$ if $\Gamma\cap\gamma\Gamma\gamma^{-1}$ has finite index in both $\Gamma$ and $\gamma\Gamma\gamma^{-1}$. We write
$Comm(\Gamma)$ for the commensurator of $\Gamma$ in $G$, namely, $Comm(\Gamma)=\{\gamma\in G:[\Gamma:\Gamma\cap\gamma\Gamma\gamma^{-1}]<\infty,[\gamma\Gamma\gamma^{-1}:\Gamma\cap\gamma\Gamma\gamma^{-1}]<\infty\}$. It is known that $Comm(\Gamma)$ is a subgroup of $G$. Moreover, $Comm(\Gamma)$ satisfies a dichotomy (see \cite{zi}): either $Comm(\Gamma)$ contains $\Gamma$ as a subgroup of finite index, or $Comm(\Gamma)$ is dense in $G$. In fact, it is a celebrated theorem of Margulis that this is precisely the dichotomy of arithmeticity v.s. non-arithmeticity.

\begin{theorem}[Margulis, \cite{zi},\cite{ma}]\label{ma1}
Let $G$ be a connected semisimple real Lie group with trivial center and no compact factor, $\Gamma<G$ be an irreducible cocompact lattice. Then either $\Gamma$ is arithmetic and $Comm(\Gamma)$ is dense in $G$ (w.r.t. Hausdorff topology), or $\Gamma$ is not arithmetic and $\Gamma$ is a finite index subgroup of $Comm(\Gamma)$.
\end{theorem}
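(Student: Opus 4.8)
The plan is to decompose the statement into a soft topological dichotomy and one genuinely deep input, Margulis's commensurator criterion for arithmeticity.

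\textbf{Step 1 (the topological alternative).} First I would show, with no reference to arithmeticity, that $Comm(\Gamma)$ is either discrete with $\Gamma$ of finite index in it, or dense in $G$. Put $H:=\overline{Comm(\Gamma)}$, a closed (hence embedded Lie) subgroup of $G$ containing $\Gamma$. Since $Comm(\Gamma)$ is a group it normalizes its closure $H$, hence its identity component $H^{0}$; so $\Gamma$ normalizes $H^{0}$, and as $\Gamma$ is Zariski dense in $G$ by the Borel density theorem (using that $G$ is semisimple with no compact factor), $\mathfrak{h}^{0}$ is an ideal of $\mathfrak{g}$ and $H^{0}$ is a connected normal subgroup of $G$. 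Because $\Gamma\subseteq H$, the image of $\Gamma$ in the Lie group $G/H^{0}$ lies in the discrete subgroup $H/H^{0}$; if $H^{0}$ were nontrivial and proper, the defining property of an irreducible lattice would force this image to be dense in $G/H^{0}$, a contradiction. Hence $H^{0}=\{e\}$ or $H^{0}=G$. In the first case $Comm(\Gamma)=H$ is discrete and contains the cocompact lattice $\Gamma$, so $[Comm(\Gamma):\Gamma]<\infty$ (an infinite-sheeted cover of the compact space $G/Comm(\Gamma)$ could not be the compact space $G/\Gamma$); in the second case $Comm(\Gamma)$ is dense.

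\textbf{Step 2 (the easy implication and reduction).} If $\Gamma$ is arithmetic, realize it -- up to commensurability and compact factors -- inside $\mathbf{G}(\mathbb{Z})$ for a semisimple $\mathbb{Q}$-group $\mathbf{G}$ whose real points surject onto $G$ with compact kernel. Then $\mathbf{G}(\mathbb{Q})$ commensurates $\mathbf{G}(\mathbb{Z})$, hence $\Gamma$, so $Comm(\Gamma)$ contains the (dense) image of $\mathbf{G}(\mathbb{Q})$ in $G$; thus an arithmetic $\Gamma$ falls in the dense case of Step 1 and in particular has infinite index in $Comm(\Gamma)$. Together with Step 1, the theorem becomes equivalent to the converse: \emph{if $Comm(\Gamma)$ is dense in $G$, then $\Gamma$ is arithmetic}.

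\textbf{Step 3 (the hard implication and the obstacle).} For the converse I would follow Margulis. A cocompact lattice is finitely generated, so the adjoint embedding $G\hookrightarrow GL(\mathfrak{g})$ realizes $\Gamma$ inside $GL_{n}(\mathbb{R})$ with a finitely generated trace field $K$. The crux is a \emph{commensurator superrigidity} statement: since $\Delta:=Comm(\Gamma)$ is dense in $G$, any finite-dimensional representation of $\Gamma$ over a local field has either relatively compact image or one that virtually extends to a continuous representation of $G$ -- here the density of the larger group $\Delta$ substitutes for the higher-rank hypothesis that ordinary superrigidity would need. Applying this to the Galois conjugates of $K\hookrightarrow\mathbb{C}$ bounds $[K:\mathbb{Q}]$, and applying it at the nonarchimedean places shows $\Gamma$ is relatively compact in $\mathbf{G}(\mathbb{Q}_{p})$ for every $p$ and at the auxiliary real places, where $\mathbf{G}=\mathrm{Res}_{K/\mathbb{Q}}\mathbf{H}$ and $\mathbf{H}$ is the $K$-Zariski closure of $\Gamma$. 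Borel--Harish-Chandra reduction theory then identifies $\Gamma$ as a lattice commensurable with $\mathbf{G}(\mathbb{Z})$, i.e.\ arithmetic. I expect Step 3 to be the essential obstacle -- proving commensurator superrigidity, and upgrading ``$\Gamma$ sits in a $\mathbb{Q}$-group'' to ``$\Gamma$ is commensurable with its $\mathbb{Z}$-points'' -- so in the paper, since a self-contained argument is long, I would simply cite \cite{ma} and \cite{zi}.
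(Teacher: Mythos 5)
The paper states this as a cited result attributed to Margulis (\cite{zi},\cite{ma}) and gives no proof of its own, so there is no internal argument to compare against; your proposal essentially explains why that citation is the right move. Your decomposition is the standard one and is correct: Step 1 (the closure $H$ of $Comm(\Gamma)$ has identity component normalized by $\Gamma$, hence normal in $G$ by Borel density, and irreducibility then forces $H^{0}$ to be trivial or all of $G$, so $Comm(\Gamma)$ is either a discrete finite-index overgroup of $\Gamma$ or dense) is a routine exercise given Borel density; Step 2 (an arithmetic $\Gamma$ has a non-discrete commensurator, since the image of $\mathbf{G}(\mathbb{Q})$ commensurates $\Gamma$) is also standard; and Step 3 --- that a dense commensurator forces arithmeticity --- is indeed the deep content, a form of superrigidity in which density of the commensurator substitutes for the higher-rank hypothesis, and it genuinely requires Margulis's machinery. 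Citing \cite{ma},\cite{zi} at Step 3 is precisely what the paper does for the entire statement, and appropriately so. One small refinement for Step 2: rather than establishing density of $\mathbf{G}(\mathbb{Q})$ in $\mathbf{G}(\mathbb{R})$ directly, it is cleaner to note merely that the image of $\mathbf{G}(\mathbb{Q})$ is non-discrete (rational unipotent one-parameter subgroups give elements accumulating at the identity) and then invoke the dichotomy of Step 1 to conclude density.
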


The commensurators of $\Gamma$ play an important role in analyzing the dynamics on $G/\Gamma$. In fact, as we will describe later, they will give nontrivial self joinings of the $G$ action on $G/\Gamma$.

\subsection{Benoist-Quint Theorems}

We are going to use several results from \cite{bq}. In order to be self contained, we collect in the following those which will be used in the proofs.

\begin{theorem}[Benoist-Quint,\cite{bq}]\label{bq1}
Let $G$ be a connected semisimple real Lie group with trivial center and no compact factor, $\Gamma<G$ be an irreducible cocompact lattice. Let $\Lambda<G$ be a Zariski dense subgroup. Consider $\Lambda$ acting on $G/\Gamma$ by left translations, then
\begin{itemize}
\item[(1)] every $\Lambda$ orbit closure is either discrete (and hence finite) or $G/\Gamma$. In particular, this is true for the action of any finite index subgroup of $\Gamma$,
\item[(2)] any sequence of distinct finite $\Lambda$ orbits has $G/\Gamma$ as the only limit in the Hausdorff topology. 
\end{itemize}
\end{theorem}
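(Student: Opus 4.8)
The plan is to derive Theorem~\ref{bq1} from the orbit-closure classification of Benoist and Quint in \cite{bq}, together with the irreducibility of $\Gamma$ and the Borel density theorem; the Benoist--Quint theorem itself is the black box.

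For part (1): since $\Lambda$ is Zariski dense in $G$, which is connected semisimple with trivial center and no compact factor, the Zariski closure of $\Lambda$ is semisimple, Zariski-connected and has no compact factor --- exactly the hypothesis under which \cite{bq} shows that every $\Lambda$-orbit closure in $G/\Gamma$ is homogeneous. So for $x\in G/\Gamma$ I would write $\overline{\Lambda x}=Hx$ with $H\le G$ closed, $\Lambda\subseteq H$, and $Hx$ carrying an $H$-invariant probability measure $\nu$ whose support is all of $Hx$. Then I would pass to $\Sigma:=\{g\in G: g_{*}\nu=\nu\}$, a closed subgroup containing $H$ (hence $\Lambda$), so Zariski dense in $G$; and I would observe that its identity component $\Sigma^{0}$ is normalized by the Zariski-dense set $\Lambda$, hence --- the $\mathrm{Ad}$-stabilizer of the corresponding Lie subalgebra being Zariski closed and containing $\Lambda$ --- is normal in $G$. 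As $G$ is semisimple with trivial center, $\Sigma^{0}$ is therefore a product of some of the simple factors of $G$.

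Now a three-way case split. If $\Sigma^{0}=\{e\}$, then $H\subseteq\Sigma$ is discrete and the finite-volume homogeneous space $H/(H\cap x\Gamma x^{-1})\cong Hx$ is finite, so $\overline{\Lambda x}=\Lambda x$ is finite (equivalently discrete, $G/\Gamma$ being compact). If $\Sigma^{0}=G$, then $\Sigma=G$ and $\nu$ is $G$-invariant, so $\overline{\Lambda x}=\operatorname{supp}\nu$ is closed and $G$-invariant, hence $=G/\Gamma$. In the remaining case $\Sigma^{0}$ is a nontrivial proper connected normal subgroup $G_{S}$ of $G$; here I would use that $\Gamma$ is irreducible, so the image of $\Gamma$ in $G/G_{S}$ is dense, i.e. $G_{S}\Gamma$ is dense in $G$ and $G_{S}\Gamma/\Gamma$ is dense in $G/\Gamma$. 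Writing $x=g\Gamma$ and using normality of $G_{S}$ one gets $\overline{G_{S}x}=g\cdot\overline{G_{S}\Gamma/\Gamma}=G/\Gamma$; since $G_{S}x=\Sigma^{0}x\subseteq\operatorname{supp}\nu=\overline{\Lambda x}$ lies in the closed set $\overline{\Lambda x}$, again $\overline{\Lambda x}=G/\Gamma$. This proves (1); the ``in particular'' remark then follows because, by Borel density, $\Gamma$ is Zariski dense in the (Zariski-connected) group $G$, and therefore so is any finite-index subgroup of $\Gamma$.

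For part (2): given distinct finite $\Lambda$-orbits $O_{n}$, compactness of $G/\Gamma$ makes $(O_{n})$ precompact in the Hausdorff topology, so it is enough to show that every Hausdorff limit $Z$ of a subsequence equals $G/\Gamma$. Such $Z$ is closed and $\Lambda$-invariant --- if $z=\lim z_{k}$ with $z_{k}\in O_{n_{k}}$ then $\lambda z=\lim\lambda z_{k}\in Z$ --- so by (1) it is finite or all of $G/\Gamma$. The crux, and the step I expect to be the main obstacle, is ruling out a finite limit: the $O_{n}$ being distinct are pairwise disjoint, so $\bigcup_{n}O_{n}$ is infinite, yet a finite limit $Z$ would force the $O_{n}$ with $n$ large to sit inside arbitrarily small neighborhoods of $Z$, which I would contradict by applying (1) to the (Zariski-dense, finite-index) stabilizer in $\Lambda$ of a point of $Z$ --- the orbit of a point near but off $Z$ cannot stay near $Z$. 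Making this last argument airtight is the delicate point; alternatively one can quote the stronger equidistribution of finite orbits toward Haar measure, also contained in \cite{bq}.
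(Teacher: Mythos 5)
The paper does not prove this theorem at all: it is quoted verbatim from Benoist--Quint as one of several black-box inputs (the surrounding text says the results from \cite{bq} are collected ``in order to be self contained''). So there is no in-paper argument to compare against; what follows is an assessment of your reconstruction on its own terms.

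Your derivation of part (1) is correct. Starting from the Benoist--Quint homogeneity of orbit closures (write $\overline{\Lambda x}=Hx$ with $H\supseteq\Lambda$ closed and $Hx$ carrying an $H$-invariant probability $\nu$), passing to the full stabilizer $\Sigma$ of $\nu$, observing that $\Lambda\subseteq\Sigma$ forces $\Sigma^0\trianglelefteq G$ by Zariski density of $\Lambda$, and then splitting on $\Sigma^0\in\{\{e\},G_S,G\}$ is a sound way to reach the finite-or-dense dichotomy. The intermediate case $\Sigma^0=G_S$ is exactly where irreducibility of $\Gamma$ is needed, and your use of it (density of $G_S\Gamma/\Gamma$, normality of $G_S$, and $\Sigma^0 x\subseteq\operatorname{supp}\nu$) is right; so is the Borel-density remark for the ``in particular.''

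Part (2) has a genuine gap, which you yourself flag. The framing (Hausdorff precompactness, $\Lambda$-invariance of limits, apply (1) to get a finite-or-everything dichotomy for a limit $Z$) is fine, but the step that excludes a finite limit $Z$ --- ``the orbit of a point near but off $Z$ cannot stay near $Z$'' --- does not follow from (1). Part (1) only constrains orbit closures, and the $O_n$ are already closed; moreover, near a $\Lambda'$-fixed point of $Z$ (for $\Lambda'\le\Lambda$ of finite index) the action is typically hyperbolic, so there is no continuity reason for $\Lambda' y_n$ to remain near $\Lambda' y=\{y\}$ when $y_n\to y$, and turning the heuristic into a proof would require a separate expansion or non-recurrence argument that you have not supplied. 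Your proposed fallback --- invoking Benoist--Quint's equidistribution of sequences of distinct finite orbits towards Haar measure, which has full support --- is not merely an alternative: it is the actual content of part (2) and is what the paper is implicitly citing. Part (2) should therefore simply be taken from \cite{bq} rather than re-derived from (1).
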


\begin{theorem}[Benoist-Quint,\cite{bq}]\label{bq2-0}
Let $n\ge 2$, and $\Gamma$ be a subgroup of $GL(n,\mathbb Z)$. Assume the Zariski closure of $\Gamma$ is semisimple, Zariski connected and with no compact factor. Consider $\Gamma$ acting on $\mathbb T^n$ naturally by automorphisms, then every $\Gamma$-orbit closure is a finite homogeneous union of affine submanifolds.
\end{theorem}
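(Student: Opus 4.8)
The plan is to obtain this as a consequence of the Benoist-Quint classification of $\Gamma$-stationary probability measures on $\mathbb{T}^n$, promoted to a statement about orbit closures by a linearization and drift argument; Theorem \ref{bq2-0} is in fact one of the main results of \cite{bq}, so in this note it will simply be invoked, and the sketch below is only meant to indicate the shape of the argument. Fix a finitely supported symmetric probability measure $\mu$ on $\Gamma$ whose support generates $\Gamma$. Since the Zariski closure $\mathbf{G}$ of $\Gamma$ is semisimple, Zariski connected and has no compact factor, the representation on $\mathbb{R}^n$ decomposes into $\mathbf{G}$-irreducible pieces on each of which $\Gamma$ acts strongly irreducibly, and on the non-trivial pieces one obtains proximality after possibly passing to a suitable exterior power. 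Under these hypotheses the Benoist-Quint stationary measure theorem gives that every ergodic $\mu$-stationary probability measure on $\mathbb{T}^n$ is either the normalized counting measure on a finite $\Gamma$-orbit, or the uniform probability measure on a finite union of translates of a $\Gamma$-invariant rational subtorus, that subtorus carrying an action of the same type.

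Next I would pass from stationary measures to orbit closures. Let $x\in\mathbb{T}^n$ and $F=\overline{\Gamma x}$; if $\Gamma x$ is finite there is nothing to prove, so assume it is infinite. By compactness of $F$ and a Krylov-Bogolyubov argument for the Markov operator attached to $\mu$ on $C(F)$, there is a $\mu$-stationary probability measure supported on $F$, which after ergodic decomposition we may take to be ergodic. The difficulty is that it could a priori be atomic, supported on a proper finite $\Gamma$-orbit inside $F$, which by itself says nothing about $F$; this is where the genuinely hard part of \cite{bq} enters. One fixes a non-periodic accumulation point $y$ of $\Gamma x$, a sequence $\gamma_k\in\Gamma$ with $\gamma_k x\to y$, and analyzes, along a $\mu^{\otimes\mathbb{N}}$-generic trajectory of the random walk, how two nearby elements of the orbit that both return near $y$ are driven apart. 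Proximality makes these displacements accumulate along the expanding directions of the walk, and strong irreducibility of those directions under $\Gamma$ forces the closed subgroup they generate to be a rational subtorus $\mathbb{T}_0$ of positive dimension, so that $F\supseteq y+\mathbb{T}_0$. Taking $\mathbb{T}_0$ maximal with this property and projecting to $\mathbb{T}^n/\mathbb{T}_0$, the image of $F$ is again a $\Gamma$-orbit closure, but now with no nontrivial translation invariance; by maximality it admits no further subtorus, hence is finite. Unwinding, $F$ is a finite union of translates of $\mathbb{T}_0$, and these cosets form a single $\Gamma$-orbit, which is exactly the "finite homogeneous union of affine submanifolds" of the statement.

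The main obstacle is precisely the step converting stationary-measure rigidity into orbit-closure rigidity, namely showing that an orbit closure which is not a finite union of affine submanifolds must contain a whole subtorus coset. This rests on the exponential-drift method of \cite{bq}: non-divergence estimates for the random walk on the torus, a careful choice of good times along generic trajectories, and control of which linear directions get expanded, all of which is considerably more delicate than the measure classification itself. Since none of it needs to be redone here, the proof of Theorem \ref{bq2-0} reduces to citing \cite{bq}; the point of the sketch is only to record why the stated hypotheses (semisimple, Zariski connected, no compact factor) are the natural ones, and how the subtorus structure produces the asserted form of the orbit closure.
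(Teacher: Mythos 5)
The paper states Theorem~\ref{bq2-0} purely as a cited result of Benoist--Quint and supplies no proof, and your proposal ultimately does the same, correctly concluding that the statement ``reduces to citing \cite{bq}.'' Your intervening sketch of the stationary-measure classification and exponential-drift argument is a reasonable high-level account of why the theorem holds (if slightly loose on the role of proximality and exterior powers, which Benoist--Quint do not in fact require for the torus case), but since the paper treats this as a black box, the match in approach is exact.
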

\begin{remark}
These affine submanifolds are defined over $\mathbb Q$, by which we mean they are given by some affine equations with coefficients in $\mathbb Q$.
\end{remark}

\begin{theorem}[Benoist-Quint,\cite{bq}]\label{bq2}
Let $n\ge 2$, and $\Gamma$ be a subgroup of $GL(n,\mathbb Z)$. Assume the Zariski closure of $\Gamma$ is semisimple, Zariski connected and with no compact factor, and acts irreducibly on $\mathbb Q^n$. Consider $\Gamma$ acting on $\mathbb T^n$ naturally by automorphisms, then
\begin{itemize}
\item[(1)] every $\Gamma$ orbit closure is either discrete (and hence finite) or $\mathbb T^n$. In particular, this is also true for the action of any finite index subgroup of $\Gamma$,
\item[(2)] any sequence of distinct finite $\Gamma$ orbits has $\mathbb T^n$ as the only limit in the Hausdorff topology. 
\end{itemize}
\end{theorem}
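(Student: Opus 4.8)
The plan is to derive both parts from Theorem~\ref{bq2-0} together with the irreducibility hypothesis: part (1) is a soft structural deduction, while part (2) needs in addition the equidistribution content of \cite{bq}.

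For part (1), fix $x\in\mathbb T^n$ and use Theorem~\ref{bq2-0} to write $\overline{\Gamma x}=C_1\cup\cdots\cup C_k$, a finite homogeneous union of affine submanifolds defined over $\mathbb Q$; since $\overline{\Gamma x}$ is the closure of a single $\Gamma$-orbit, $\Gamma$ permutes these finitely many pieces transitively, so each $C_i$ is a $\Gamma$-translate of $C_1$. Write $C_1=v_1+H$ with $H\le\mathbb T^n$ a subtorus (defined over $\mathbb Q$), and let $\Gamma_1=\{\gamma\in\Gamma:\gamma C_1=C_1\}$, a finite-index subgroup of $\Gamma$. An element of $GL(n,\mathbb Z)$ carrying a coset of $H$ to a coset of $H$ must preserve $H$, so $\Gamma_1$ preserves $H$; equivalently the rational subspace $W\subset\mathbb Q^n$ underlying $H$ is $\Gamma_1$-invariant. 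Since $\Gamma_1$ has finite index in $\Gamma$ and the Zariski closure of $\Gamma$ is connected, $\Gamma_1$ has the same Zariski closure as $\Gamma$, so $W$ is invariant under that closure, and irreducibility forces $W=0$ or $W=\mathbb Q^n$. If $W=0$ then $H$ is trivial, every $C_i$ is a point, and $\overline{\Gamma x}$ is finite; if $W=\mathbb Q^n$ then $H=\mathbb T^n$, so $C_1=\mathbb T^n$ and $\overline{\Gamma x}=\mathbb T^n$. The statement for a finite-index $\Gamma'\le\Gamma$ is immediate, since $\Gamma'$ has the same Zariski closure --- connected, semisimple, compact-factor-free, and acting irreducibly on $\mathbb Q^n$ --- as $\Gamma$.

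For part (2), note first that a finite $\Gamma$-orbit consists of torsion points: the stabiliser $\Gamma'$ of a point of such an orbit has finite index, hence the same Zariski closure as $\Gamma$; its fixed subspace in $\mathbb Q^n$ is $\Gamma'$-invariant and, since the Zariski closure acts nontrivially for $n\ge 2$, cannot be all of $\mathbb Q^n$, so it is $0$, whence the $\Gamma'$-fixed set in $\mathbb T^n$ is a finite group and the point in question is torsion. Because $\Gamma\subset GL(n,\mathbb Z)$ preserves the order of torsion points and there are only finitely many torsion points of a given order, a sequence of pairwise distinct finite orbits $F_j$ has orders tending to infinity; passing to a subsequence, assume $F_j\to Y$ in the Hausdorff metric, with $Y$ closed and $\Gamma$-invariant (a Hausdorff limit of $\Gamma$-invariant sets under a homeomorphic action is $\Gamma$-invariant). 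If some $y\in Y$ has infinite orbit, part (1) gives $\overline{\Gamma y}=\mathbb T^n\subseteq Y$, so $Y=\mathbb T^n$; otherwise $Y$ consists entirely of torsion points, and softness runs out here --- a priori a single orbit of a high-order torsion point could cluster near a proper closed set. This is precisely where Benoist-Quint's equidistribution theorem is needed: the normalised counting measures on the $F_j$ converge weak-$*$ to Haar measure on $\mathbb T^n$. Weak-$*$ convergence to a measure of full support makes the supports $\epsilon$-dense for every $\epsilon>0$ --- cover $\mathbb T^n$ by finitely many $\epsilon$-balls, each of which eventually carries positive $F_j$-mass --- so $F_j\to\mathbb T^n$, contradicting $Y\subsetneq\mathbb T^n$ and proving (2).

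The hard step is the equidistribution input to part (2): part (1) and the passage from weak-$*$ convergence to Hausdorff convergence are routine, but there is no soft reason a single $\Gamma$-orbit of a high-order torsion point cannot concentrate near a proper closed set, and excluding this is the spectral/strong-approximation core of \cite{bq}.
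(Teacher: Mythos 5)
The paper does not prove Theorem~\ref{bq2} at all: it is stated as a citation of Benoist--Quint, exactly like Theorems~\ref{bq1} and~\ref{bq2-0}, and the author immediately moves on. There is therefore no ``paper's own proof'' to compare against; what you have done is reconstruct a plausible derivation of the cited result from the other cited results, and your reconstruction is correct.

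Your argument for part (1) is a clean, self-contained deduction from Theorem~\ref{bq2-0} and irreducibility. The key points are all right: the connected components of $\overline{\Gamma x}$ are rational translates of a single rational subtorus $H$, the finite-index stabiliser $\Gamma_1$ of a component preserves the underlying rational subspace $W$, a finite-index subgroup of $\Gamma$ has the same (connected) Zariski closure, and irreducibility forces $W=0$ or $W=\mathbb Q^n$. For part (2), the preliminary steps (finite orbits consist of torsion points via the trivial-fixed-space argument, $\Gamma\subset GL(n,\mathbb Z)$ preserves torsion order, distinct finite orbits have orders tending to infinity) are all sound, and you are right that no soft argument can finish from there: the passage from ``$Y$ is a closed set of torsion points'' to ``$Y=\mathbb T^n$'' genuinely requires the Benoist--Quint equidistribution theorem for the normalised counting measures on the $F_j$. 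You invoke it correctly, and the deduction from weak-$*$ convergence to Haar to Hausdorff convergence of supports (every $\epsilon$-ball eventually carries positive mass) is right. You also flag honestly that this equidistribution input is the essential content and is not being reproved, which is exactly the posture the paper itself takes by citing rather than proving. The only cosmetic quibble is the remark ``the Zariski closure acts nontrivially for $n\ge 2$'': the relevant point is simply that the Zariski closure is a nontrivial subgroup of $GL(n,\mathbb R)$ (being noncompact semisimple), so its tautological action on $\mathbb R^n$ cannot be trivial; the bound on $n$ is not what is doing the work there.
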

\begin{remark}
The above theorem applies when $\Gamma<SL(n,\mathbb Z)$ is Zariski dense in $SL(n,\mathbb R)$.
\end{remark}

\section{Orbit closure of $G$ action on products of $(G/\Gamma,Haar)$}

Let $L$ be a group. Consider two measure preserving systems $(L,X_1,\mu)$ and $(L,X_2,\nu)$, a joining is a measure on $X_1\times X_2$ which is invariant under the $L$ action, and coincides with $\mu$ (respectively $\nu$) when projects to $X_1$ (respectively $X_2$). A self joining of$(L,X,\mu)$ is a joining for $(L,X,\mu)$ and $(L,X,\mu)$. In this subsection, we describe all ergodic self joinings of $G$ action on $(G/\Gamma,Haar)$.

As $G$ is generated by unipotent elements, applying Ratner rigidity Theorems, any ergodic self joining either coincides with the product Haar measure, or it reduces to a Haar measure supported on a closed $G$ invariant homogeneous submanifold. The latter is related to the elements in $Comm(\Gamma)$, and is essentially a finite extension of Haar measrure on $G/\Gamma$. There are many ways to describe such self joinings. We present a description via $G$ equivariant maps.

For any $\gamma\in Comm(\Gamma)$, let $\hat\Gamma=\Gamma\cap\gamma\Gamma\gamma^{-1}$, we have a series of $G$ equivariant maps: $$G/\hat\Gamma\hookrightarrow G/\hat\Gamma\times G/\hat\Gamma\to G/\Gamma\times G/(\gamma\Gamma\gamma^{-1})\to G/\Gamma\times G/\Gamma$$defined by $$(x\hat\Gamma)\mapsto(x\hat\Gamma,x\hat\Gamma)\mapsto (x\Gamma, x\gamma\Gamma\gamma^{-1})\mapsto (x\Gamma,x\gamma\Gamma).$$ Then the Haar measure on $G/\hat\Gamma$ will be mapped to a $G$ invariant measure on $G/\Gamma\times G/\Gamma$. We will call this self joining supported on a graph. 

\begin{lemma}\label{le1}
For any $\gamma\in Comm(\Gamma)$, the $\Gamma$ orbit of point $\gamma\Gamma$ in $G/\Gamma$ contains finite many points. On the other hand, if $\Gamma$ orbit of a point $x \in G/\Gamma$ contains finite many points, then $x=\gamma\Gamma$ for some $\gamma\in Comm(\Gamma)$.
\end{lemma}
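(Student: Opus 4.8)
The plan is to reduce both halves of the statement to the orbit--stabilizer correspondence together with the definition of the commensurator. First I would observe that for a point $g\Gamma\in G/\Gamma$ the stabilizer in $\Gamma$ of $g\Gamma$ under left translation is
$\{\lambda\in\Gamma:\lambda g\Gamma=g\Gamma\}=\{\lambda\in\Gamma:g^{-1}\lambda g\in\Gamma\}=\Gamma\cap g\Gamma g^{-1}$,
so the $\Gamma$-orbit of $g\Gamma$ is in natural bijection with $\Gamma/(\Gamma\cap g\Gamma g^{-1})$. Consequently the orbit is finite precisely when $[\Gamma:\Gamma\cap g\Gamma g^{-1}]<\infty$.

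Granting this, the first assertion is immediate: if $\gamma\in Comm(\Gamma)$ then by definition $[\Gamma:\Gamma\cap\gamma\Gamma\gamma^{-1}]<\infty$, hence the $\Gamma$-orbit of $\gamma\Gamma$ consists of finitely many points.

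For the converse I would write $x=g\Gamma$ and assume the orbit is finite, so that $\Delta:=\Gamma\cap g\Gamma g^{-1}$ has finite index in $\Gamma$. The only substantive point is to verify the second half of the commensurability condition, namely $[g\Gamma g^{-1}:\Delta]<\infty$. Here I would argue with covolumes: $\Delta$ is a finite-index subgroup of the lattice $\Gamma$, hence itself a lattice in $G$, with $\mathrm{covol}(\Delta)=[\Gamma:\Delta]\,\mathrm{covol}(\Gamma)<\infty$; on the other hand $\Delta$ is a discrete subgroup of $g\Gamma g^{-1}$, which is a lattice with $\mathrm{covol}(g\Gamma g^{-1})=\mathrm{covol}(\Gamma)$ since $G$ is semisimple and therefore unimodular. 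Because a discrete subgroup of finite covolume contained in a lattice has finite index, equal to the ratio of covolumes, we get $[g\Gamma g^{-1}:\Delta]=[\Gamma:\Delta]<\infty$. Thus $g\in Comm(\Gamma)$, and taking $\gamma=g$ finishes the argument.

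The main obstacle is exactly this last index computation in the converse direction; everything else is formal bookkeeping. As an alternative to the covolume argument one could invoke cocompactness directly --- a finite-index subgroup of a cocompact lattice is cocompact, and a discrete subgroup containing a cocompact lattice is itself cocompact and hence of finite index over it --- but the covolume version is cleaner and uses only that $\Gamma$ is a lattice of finite covolume.
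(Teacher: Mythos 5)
Your proof is correct and follows the same orbit--stabilizer approach as the paper: identify the stabilizer of $g\Gamma$ in $\Gamma$ as $\Gamma\cap g\Gamma g^{-1}$, so the orbit is finite iff this subgroup has finite index in $\Gamma$. The one place you go beyond the paper's terse ``follows similarly'' is the covolume (or cocompactness) computation verifying the second half of the commensurability condition, $[g\Gamma g^{-1}:\Gamma\cap g\Gamma g^{-1}]<\infty$; this step is genuinely needed since the orbit--stabilizer bijection only yields the first half, and your argument supplies it correctly.
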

\begin{proof}
For $\gamma\in Comm(\Gamma)$, let $\hat\Gamma=\Gamma\cap\gamma\Gamma\gamma^{-1}$, then $\hat\Gamma$ is the stabilizer of $\gamma\Gamma$. Combine this with the fact that $[\Gamma: \hat\Gamma]<\infty$, we obtain the first claim. The second claim follows similarly by considering the stabilizer.
\end{proof}

\begin{proposition}\label{pro1}
Combining with the product Haar measure, these exhaust all ergodic self joinings on $G/\Gamma\times G/\Gamma$.
\end{proposition}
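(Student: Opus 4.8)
An ergodic self joining $\lambda$ of the $G$-action on $(G/\Gamma,\mathrm{Haar})$ is, by definition, an ergodic $\Delta G$-invariant probability measure on $(G\times G)/(\Gamma\times\Gamma)$ with both marginals equal to the Haar measure on $G/\Gamma$, where $\Delta G=\{(g,g):g\in G\}$ acts diagonally. Since $G$ (as in Theorem \ref{t1}) is connected semisimple with no compact factor, it is generated by its $\mathrm{Ad}$-unipotent one-parameter subgroups, hence so is $\Delta G$ inside $G\times G$. By Ratner's measure classification theorem there are a point $p=(g_1\Gamma,g_2\Gamma)$ and a closed subgroup $H$ with $\Delta G\subseteq H\subseteq G\times G$ such that $Hp$ is a closed orbit carrying the finite $H$-invariant measure $\lambda$; equivalently, $\mathrm{Stab}_H(p)=H\cap (g_1,g_2)(\Gamma\times\Gamma)(g_1,g_2)^{-1}$ is a lattice in $H$. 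So everything reduces to understanding the closed subgroups $H$ between $\Delta G$ and $G\times G$, together with the constraint that this stabilizer be a lattice.

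The first step is the purely group-theoretic classification of such $H$. Because $G$ has trivial center, $G=\prod_i G_i$ is a direct product of simple adjoint factors, and its closed normal subgroups are exactly the subproducts $N=\prod_{i\in S}G_i$. Let $\pi_1,\pi_2$ be the two projections $G\times G\to G$; then $\pi_j(H)=G$ since $H\supseteq\Delta G$. Putting $N_1=\{g\in G:(g,e)\in H\}$ and $N_2=\{g\in G:(e,g)\in H\}$, the inclusion $\Delta G\subseteq H$ forces $N_1=N_2=:N$, a closed normal subgroup, and forces $N\times N\subseteq H$; then $H/(N\times N)$ is a subgroup of $(G/N)\times(G/N)$ with trivial Goursat kernels, so by Goursat's lemma it is the graph of an automorphism of $G/N$, which, containing the diagonal, is the identity. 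Hence $H=H_N:=\{(g,g'):gN=g'N\}$ for a unique subproduct $N$.

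Now I would run through the three possibilities. If $N=G$ then $H=G\times G$ and $\lambda$ is the product Haar measure. If $N=\{e\}$ then $H=\Delta G$; with $\gamma:=g_2^{-1}g_1$ the orbit is $\Delta G\cdot(g_1\Gamma,g_2\Gamma)=\{(x\gamma\Gamma,x\Gamma):x\in G\}$, and the fact that $\mathrm{Stab}_{\Delta G}(p)=\Delta(g_1\Gamma g_1^{-1}\cap g_2\Gamma g_2^{-1})$ is a lattice makes $\gamma\Gamma\gamma^{-1}\cap\Gamma$ a lattice in $G$, hence (being a sub-lattice of $\Gamma$) of finite index in $\Gamma$, so $\gamma\in Comm(\Gamma)$; comparing with the chain of $G$-equivariant maps constructed just before Lemma \ref{le1} shows that $\lambda$ is exactly the graph joining associated to $\gamma$, up to flipping the two coordinates and replacing $\gamma$ by $\gamma^{-1}$ — operations under which that family is stable, cf. Lemma \ref{le1}. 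Finally, if $N$ is a proper nontrivial subproduct I claim no such joining exists: $\pi_1|_{H_N}$ is a surjection $H_N\to G$ with kernel $\{e\}\times N$, and if $\mathrm{Stab}_H(p)$ were a lattice in $H_N$ then $\pi_1(\mathrm{Stab}_H(p))$, a subgroup of the discrete group $g_1\Gamma g_1^{-1}$, would be discrete, so $\mathrm{Stab}_H(p)\cdot(\{e\}\times N)$ would be closed, whence — by Raghunathan's theorem on lattices and closed normal subgroups — $\mathrm{Stab}_H(p)\cap(\{e\}\times N)=\{e\}\times(g_2\Gamma g_2^{-1}\cap N)$ would be a lattice in $N$; but $g_2\Gamma g_2^{-1}\cap N=g_2(\Gamma\cap N)g_2^{-1}$ is finite because $\Gamma$ is irreducible, and a finite group is not a lattice in the noncompact group $N$. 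This contradiction eliminates the intermediate subgroups, and the classification is complete.

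The step I expect to be the crux is this last one — showing that the ``partial diagonals'' $H_N$ with $N$ proper and nontrivial never arise. It is the only place where irreducibility of $\Gamma$ is genuinely used (through the finiteness of $\Gamma\cap N$), and it is where one needs the input on lattices and normal subgroups; by contrast the reduction to Ratner's theorem and the Goursat-style description of $H$ are routine. One should also note, to match the assertion that these are all ergodic joinings, that each graph joining is indeed ergodic: it is the image of the Haar measure on $G/\hat\Gamma$ with $\hat\Gamma=\Gamma\cap\gamma\Gamma\gamma^{-1}$ a lattice, on which $G$ acts transitively.
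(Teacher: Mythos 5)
Your proof is correct but takes a genuinely different route from the paper's. The paper applies Ratner's theorem only to assert that the support $W$ of the joining is a homogeneous $G$-orbit, and then exploits the Benoist--Quint classification of $\Gamma$-orbit closures (Theorem \ref{bq1}): it intersects $W$ with the fiber $\{\Gamma\}\times G/\Gamma$, observes that the stabilizer of the first coordinate reduces the $\Delta G$-action on this fiber to a $\Gamma$-action on the second factor, and concludes from Theorem \ref{bq1} that this intersection is finite whenever $\mu\neq\mathrm{Haar}\times\mathrm{Haar}$; Lemma \ref{le1} then produces the commensurator element. You instead push Ratner further and classify all closed $H$ with $\Delta G\subseteq H\subseteq G\times G$ by a Goursat argument, obtaining the one-parameter family $H_N=\{(g,g'):gN=g'N\}$ indexed by normal subproducts $N\lhd G$, and you rule out the intermediate ``partial diagonals'' ($\{e\}\subsetneq N\subsetneq G$) via Raghunathan's lattice/normal-subgroup theorem together with the finiteness of $\Gamma\cap N$ for an irreducible lattice. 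This localizes the use of irreducibility very cleanly (in the paper it enters implicitly, through the Zariski density hypothesis of Theorem \ref{bq1}) and also makes visible exactly which joinings would appear for a reducible $\Gamma$. The paper's route is shorter once one has Benoist--Quint in hand, but it is somewhat looser: the assertion that an infinite $\Gamma$-invariant intersection forces density really uses both parts (1) and (2) of Theorem \ref{bq1}, and the reader has to supply that step. One small thing you should make explicit: the claim that $\Gamma\cap N$ is finite for a proper nontrivial connected normal $N$ is standard for irreducible lattices in centerless semisimple groups without compact factors, but it deserves a citation (Raghunathan or Margulis) rather than being folded into the word ``irreducible.''
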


\begin{proof}
Let $\mu$ be an ergodic self joining on $G/\Gamma\times G/\Gamma$, and assume that $\mu\neq Haar\times Haar$. By Theorem \ref{bq1}, $\mu$ is a Haar measure supported on a $G$-invariant homogeneous space. Let $W$ be the support of $\mu$. Then $W\cap (\{\Gamma\}\times G/\Gamma)$ is finite. Indeed, notice that the $G$ action on $\{\Gamma\}\times G/\Gamma$ reduces to a $\Gamma$ action on $G/\Gamma$, then if $W\cap (\{\Gamma\}\times G/\Gamma)$ is not finite, by Theorem \ref{bq1}, the $\Gamma$ orbit must be dense, this contradicts to the finiteness of $\mu$ and $\mu\neq Haar\times Haar$. 

Now by Lemma \ref{le1}, there is a $\gamma\in Comm(\Gamma)$ such that $W\cap (\{\Gamma\}\times G/\Gamma)=\Gamma\circ(\Gamma,\gamma\Gamma)$. In particular, $(\Gamma,\gamma\Gamma)\in W$. From here, it is easy to see that the measure $\mu$ is supported on a graph just as what we described before.
\end{proof}
By Proposition \ref{pro1}, we have
\begin{corollary}\label{co2}
The orbit closure of any point will be given by the support of some ergodic self joining.
\end{corollary}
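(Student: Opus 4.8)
The plan is to deduce the corollary from Proposition~\ref{pro1} by identifying the orbit closure of a point with the support of the natural invariant measure living on it. Fix $\bar x\in(G/\Gamma)^2$ and consider the diagonal $G$-action. Since $G$ is connected semisimple with no compact factor, it is generated by its one-parameter unipotent subgroups, so Ratner's orbit closure theorem applies to the $G$-action on $(G/\Gamma)^2\cong(G\times G)/(\Gamma\times\Gamma)$: the orbit closure $\overline{G\bar x}$ is homogeneous, i.e.\ $\overline{G\bar x}=H\bar x$ for some closed subgroup $G\le H\le G\times G$, the orbit $H\bar x$ is closed, and it carries an $H$-invariant probability measure $\mu$ with $\mathrm{supp}\,\mu=\overline{G\bar x}$.

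Next I would check that $\mu$ is a self joining of $(G,G/\Gamma,Haar)$. The push-forward of $\mu$ under either coordinate projection $(G/\Gamma)^2\to G/\Gamma$ is a $G$-invariant probability measure; as $G$ acts transitively on $G/\Gamma$, the only such measure is $Haar$, so both marginals of $\mu$ are $Haar$. The measure $\mu$ need not be ergodic, so I would pass to its ergodic decomposition $\mu=\int\mu_\omega\,d\omega$: by the same transitivity argument each $\mu_\omega$ has $Haar$ marginals and is therefore an ergodic self joining, and Proposition~\ref{pro1} forces each $\mu_\omega$ to be either $Haar\times Haar$ or the graph joining $\mu_\gamma$ attached to some $\gamma\in Comm(\Gamma)$.

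The remaining point is to rule out a proper mixture of these components. If $Haar\times Haar$ appears with positive weight, then $\mu\ge c\,(Haar\times Haar)$ for some $c>0$, so $\mathrm{supp}\,\mu=(G/\Gamma)^2$ and $\overline{G\bar x}$ is the support of the ergodic self joining $Haar\times Haar$. Otherwise $\mu$ is supported on the union of the closed $G$-orbits $W_\gamma:=G\cdot(\Gamma,\gamma\Gamma)=\mathrm{supp}\,\mu_\gamma$ taken over the double cosets $\Gamma\gamma\Gamma$ occurring in the decomposition. If infinitely many occur, then the traces $W_\gamma\cap(\{\Gamma\}\times G/\Gamma)=\{\Gamma\}\times\Gamma(\gamma\Gamma)$ are pairwise distinct finite $\Gamma$-orbits (Lemma~\ref{le1}), so Theorem~\ref{bq1}(2) forces their Hausdorff limit, and hence $\{\Gamma\}\times G/\Gamma$, to lie in $\mathrm{supp}\,\mu$; by $G$-invariance $\mathrm{supp}\,\mu=(G/\Gamma)^2$ and we are back in the previous case. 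If only finitely many occur, $\mathrm{supp}\,\mu$ is their union; distinct double cosets give disjoint $W_\gamma$, and since $\bar x\in\mathrm{supp}\,\mu$ the point $\bar x$ lies in one $W_\gamma$, whence $\overline{G\bar x}\subseteq W_\gamma$ (as $W_\gamma$ is closed and $G$-invariant) and in fact $\overline{G\bar x}=W_\gamma$, so only that single double coset occurs. In this last situation $\mu=\mu_\gamma$ is ergodic and $\overline{G\bar x}=W_\gamma=\mathrm{supp}\,\mu_\gamma$, which finishes the proof.

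I expect this last step — showing that the canonical measure on the orbit closure is honestly ergodic rather than a proper mixture — to be the main obstacle; what makes it go through is that every candidate support ($W_\gamma$ or $(G/\Gamma)^2$) is itself already a $G$-orbit closure and distinct $W_\gamma$'s are disjoint, so $\overline{G\bar x}$ cannot split into $G$-invariant pieces. An alternative route for this step would be to apply Moore's ergodicity theorem directly to the $G$-action on $H\bar x\cong H/\mathrm{Stab}_H(\bar x)$.
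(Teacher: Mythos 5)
Your argument is correct and follows the route the paper intends: the corollary is stated as an immediate consequence of Proposition \ref{pro1}, with Ratner's orbit closure theorem supplying the homogeneous set $H\bar x$ and its invariant measure, and the joining classification identifying that measure's support. Your extra care in ruling out a proper mixture (using the disjointness of the graphs $W_\gamma$ and Theorem \ref{bq1}(2) in the infinite case) just fills in details the paper treats as obvious, so this is essentially the same proof.
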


Let $(x\Gamma),(y\Gamma)$ be two points on $G/\Gamma$. Define a relation $\sim$: $(x\Gamma)\sim (y\Gamma)$ if there exists a $\gamma\in Comm(\Gamma)$ such that $x= y\gamma$. It is straightforward to see that $\sim$ is an equivalence relation.

\begin{theorem}\label{pj-1}
Let $(a_1,\ldots,a_\ell)\in (G/\Gamma)^{\ell}$, $\ell\ge 1$. If there is {\bf no} pair $i,j$ with $i\neq j$ such that $a_i\sim a_j$, then the $G$-orbit closure of $(a_1,\ldots,a_\ell)\in (G/\Gamma)^{\ell}$ is $(G/\Gamma)^{\ell}$.
\end{theorem}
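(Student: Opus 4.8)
The plan is to argue by induction on $\ell$, using Corollary \ref{co2} to identify the $G$-orbit closure with the support of an ergodic self joining of the $G$-action on the product, and then to leverage the classification in Proposition \ref{pro1} (suitably extended to $\ell$-fold products) together with the hypothesis that no two coordinates are $\sim$-equivalent. The base case $\ell=1$ is Theorem \ref{bq1}(1) applied to $\Gamma$ itself (or rather to the $G$-action on $G/\Gamma$, which is transitive), so the orbit closure of $a_1$ is all of $G/\Gamma$. For the inductive step, assume the statement for $\ell-1$ and let $\bar a=(a_1,\ldots,a_\ell)$ with all coordinates pairwise non-equivalent. Let $Z=\overline{G\bar a}\subset (G/\Gamma)^\ell$. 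By Corollary \ref{co2}, $Z$ is the support of an ergodic $G$-invariant measure $\mu$ on $(G/\Gamma)^\ell$, and I want to show $\mu=Haar^{\otimes\ell}$, whence $Z=(G/\Gamma)^\ell$.

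First I would project $\mu$ onto the first $\ell-1$ coordinates; by the inductive hypothesis combined with Corollary \ref{co2} (applied to $(a_1,\dots,a_{\ell-1})$, which still has pairwise non-equivalent coordinates), this projection is $Haar^{\otimes(\ell-1)}$. So $\mu$ is a $G$-invariant joining of $(Haar^{\otimes(\ell-1)}$ on $(G/\Gamma)^{\ell-1})$ with $(Haar$ on $G/\Gamma)$ in the last coordinate. The disintegration of $\mu$ over the first $\ell-1$ coordinates gives, for $Haar^{\otimes(\ell-1)}$-a.e.\ point, a conditional measure on $G/\Gamma$; ergodicity and the Ratner-type rigidity (as invoked in Proposition \ref{pro1}) force $\mu$ itself to be a homogeneous measure, i.e.\ the Haar measure on a closed $G$-orbit $W=\mathrm{supp}(\mu)$ inside $(G/\Gamma)^\ell$. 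It remains to show that the only such $W$ surjecting onto $(G/\Gamma)^{\ell-1}$ under the first projection, and onto $G/\Gamma$ under the last, with all the intermediate two-coordinate projections being \emph{non-graph} (by the hypothesis $a_i\not\sim a_j$), is the whole product. For this I would slice: fix the first $\ell-1$ coordinates at $(\Gamma,\Gamma,\ldots,\Gamma)$ (using that the diagonal-type points are generic enough, or rather pass to the fiber over a generic point), so that the $G$-action restricts to a $\hat\Gamma$-action on the last copy of $G/\Gamma$ for an appropriate finite-index-type subgroup; if the fiber were finite, Lemma \ref{le1} would produce some $\gamma\in Comm(\Gamma)$ realizing an equivalence $a_\ell\sim a_j$ for some $j<\ell$, contradicting the hypothesis, so the fiber is infinite, and Theorem \ref{bq1}(1) forces it to be all of $G/\Gamma$; since this holds on a dense set of fibers and $W$ is closed, $W=(G/\Gamma)^\ell$.

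The main obstacle, and the step that needs the most care, is justifying the reduction of an arbitrary ergodic self joining on the $\ell$-fold product to a homogeneous (algebraic) measure and then ruling out the ``partial graph'' possibilities — that is, the case where $W$ is a proper homogeneous subvariety that projects onto $(G/\Gamma)^{\ell-1}$ but is still cut out by some equivariant graph relation tying the last coordinate to a \emph{combination} of the earlier ones rather than to a single $a_j$. Concretely, one must show that every nontrivial $G$-equivariant factor relation between coordinate copies of $G/\Gamma$ is, up to the $\Gamma$-action, of the graph-over-$Comm(\Gamma)$ form classified in Proposition \ref{pro1}, applied pairwise; this should follow from the structure of $G$-equivariant self-maps of $G/\Gamma$ (which are given by right translations by $Comm(\Gamma)$-elements, hence "pairwise" in nature), but spelling out that a joining with all pairwise marginals equal to $Haar\times Haar$ must be the full product — an analogue of relative unique ergodicity / the absence of higher-order algebraic relations — is where the real work lies, and is exactly the place the Benoist–Quint machinery (Theorem \ref{bq1}) is doing its job.
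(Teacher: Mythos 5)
Your high‑level plan — induct on $\ell$, use Ratner/Benoist–Quint rigidity to make the orbit closure algebraic, and then contradict the pairwise $\not\sim$ hypothesis when the orbit closure is proper — is the same as the paper's. But your write‑up leaves exactly the step that carries all the weight unfilled, and you candidly say so at the end: you do not show that a proper homogeneous $W\subset (G/\Gamma)^\ell$ surjecting onto the first $\ell-1$ coordinates must come from a \emph{pairwise} graph relation $a_i\sim a_\ell$ rather than some relation tying the last coordinate to several of the earlier ones. Your slicing argument does not close this. If you freeze the first $\ell-1$ coordinates at $(\Gamma,\ldots,\Gamma)$ and find the fiber is a finite $\Gamma$‑orbit, Lemma \ref{le1} only tells you the fiber points are of the form $\gamma\Gamma$ with $\gamma\in Comm(\Gamma)$; that is a relation between the fiber point and the base point $\Gamma$, not a relation between $a_\ell$ and a specific $a_j$. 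So the asserted ``Lemma \ref{le1} would produce $a_\ell\sim a_j$'' is a non sequitur as written, and the conclusion $a_\ell\sim a_j$ does not follow from the finiteness of that single fiber alone.

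The paper closes the gap with a different device. After using the inductive hypothesis and algebraicity to reduce to $W=H.(a_1,\ldots,a_{k+1})$ with $G^k\subset H\subsetneq G^{k+1}$ and finite fibers over $(G/\Gamma)^k$, it passes to a finite cover $(G/\Gamma')^{k+1}$ so that the orbit closure becomes the graph of a single $G$‑equivariant map $\omega:(G/\Gamma')^k\to G/\Gamma'$. Such an $\omega$ is induced by a group homomorphism $G^k\to G$ sending $\Gamma'^k$ into $\Gamma'$, and the structural point — using semisimplicity, triviality of the center, absence of compact factors, and irreducibility of $\Gamma$ — is that this homomorphism must factor through a \emph{single} coordinate $i$. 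Only then does one obtain a genuine two‑coordinate graph in $(G/\Gamma)^2$ between coordinates $i$ and $k+1$, to which Proposition \ref{pro1} applies and yields $a_i\sim a_{k+1}$, the desired contradiction. That factorization through one coordinate is the missing ingredient in your proposal: it is precisely what rules out the ``partial graph'' or ``combination'' relations you correctly flagged as the obstacle, and it does not follow from Lemma \ref{le1} or from Theorem \ref{bq1} alone. Supplying the finite‑extension reduction and the factorization of the equivariant map would complete your argument along the lines of the paper.
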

\begin{proof}
By induction on $\ell$. When $\ell=1$, it is true because $G$ action is minimal. When $\ell=2$, this is a corollary of Proposition \ref{pro1}. 

Now assume it is true for $\ell=1,2,\ldots,k$, we want to prove the case that $\ell=k+1$. Since the theorem is true for $\ell=k$, apply Ratner's results on measure rigidity and orbit closure, the $G$-orbit closure of $(a_1,\ldots,a_{k+1})$ is algebraic. Let $H$ be an algebraic group such that $$W:=\overline{G.(a_1,\ldots,a_{k+1})}=H.(a_1,\ldots,a_{k+1}).$$Then it follows that $G^k\subset H\subset G^{k+1}$ and $vol(H/(H\cap \Gamma^{k+1}))<\infty$. If $H=G^{k+1}$, then we are done. 

Now if $H\subsetneq G^{k+1}$, then $H=G^k$. Let $\pi_k:(G/\Gamma)^{k+1}\to (G/\Gamma)^{k}$ be the projection map to the first $k$ coordinates. By assumption, $\pi_k(W)=(G/\Gamma)^{k}$. Then by algebraicity of $W$, $$\#(W\cap \pi^{-1}_k(\bar x))<\infty\;\text{for any }\bar x\in (G/\Gamma)^{k}.$$This enables us to take finite extension of $(G/\Gamma)^{k+1}$ to obtain $(G/\Gamma')^{k+1}$, such that the orbit closure of $(a_1,\ldots,a_{k+1})$ intersects the fibre built by the corresponding projection map $\pi_k'$ with exactly one point. Let $W'$ be the orbit closure. It is given by $(x,\omega (x))\in(G/\Gamma')^k\times G/\Gamma'$ for some $G$ equivariant map $\omega:(G/\Gamma')^k\to G/\Gamma'$. In fact, $\omega$ comes from a group homomorphism from $G^k$ to $G$ such that $\omega(\Gamma'^k)=\Gamma'$. From here, one have that $\omega$ maps one coordinate of $(G/\Gamma')^k$ to its image. Let it be the $i$th coordinate. Then combine $i$th and $(k+1)$th coordinate of $(G/\Gamma')^{k+1}$, the corresponding $G$ orbit is supported on a graph in $(G/\Gamma)^2$. Therefore by Proposition \ref{pro1}, we have $a_i\sim a_{k+1}$, a contradiction to our assumption. This finishes the proof.
\end{proof}

\section{Orbit closure of certain group actions on products of $\mathbb T^n$}

The space of self joinings of discrete group actions on $\mathbb T^n$ is a little bit complicated than that of the $G$ action described in previous subsection. One reason is that there are infinitely many finite orbits on $\mathbb T^n$.

\begin{lemma}\label{to1}
Let $n\ge 2$, and $\Gamma$ be a subgroup of $GL(n,\mathbb Z)$. Assume the Zariski closure of $\Gamma$ is semisimple, Zariski connected and with no compact factor, and acts irreducibly on $\mathbb Q^n$. Let $C(\Gamma)=\{\lambda\in M(n\times n,\mathbb Z):\det\lambda\neq 0,\lambda\circ \gamma=\gamma\circ\lambda,\;\forall \gamma\in \Gamma\}$ be the space of centralizers of $\Gamma$. Then $C(\Gamma)=\{k I_n:k\neq 0, k\in\mathbb Z\}$.
\end{lemma}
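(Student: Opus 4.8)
The plan is to use Schur's lemma together with the irreducibility hypothesis. First I would observe that any $\lambda \in C(\Gamma)$ commutes with all of $\Gamma$, hence commutes with the Zariski closure $\mathbf{G}$ of $\Gamma$ as well, since the centralizer of a set is Zariski closed. Thus $\lambda$ lies in $\mathrm{End}_{\mathbf{G}}(\mathbb{Q}^n)$, the ring of $\mathbf{G}$-equivariant endomorphisms of $\mathbb{Q}^n$. Since $\mathbf{G}$ acts irreducibly on $\mathbb{Q}^n$ by hypothesis, Schur's lemma tells us that $\mathrm{End}_{\mathbf{G}}(\mathbb{Q}^n)$ is a division algebra $D$ over $\mathbb{Q}$, finite-dimensional, containing the scalars $\mathbb{Q} I_n$.

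Next I would upgrade this to show the division algebra is in fact just $\mathbb{Q}$ itself. The cleanest way is to pass to $\mathbb{R}$: since $\mathbf{G}$ is semisimple, Zariski connected, with no compact factor, and the representation on $\mathbb{Q}^n$ is irreducible, one argues that the representation remains irreducible over $\mathbb{R}$ (the relevant point being that a nontrivial $\mathbb{Q}$-irreducible representation of such a group does not split further over $\mathbb{R}$ — this uses that the Galois group acting on the set of $\mathbb{R}$-irreducible constituents would have to be trivial here, or alternatively one invokes that any real endomorphism commuting with $\mathbf{G}(\mathbb{R})$ is scalar by real Schur plus the fact that the only possibilities $\mathbb{R}, \mathbb{C}, \mathbb{H}$ are cut down to $\mathbb{R}$ because $\mathbf{G}$ has no compact factors and is connected, so its image cannot preserve a complex or quaternionic structure while acting irreducibly — more precisely, a $\mathbb{C}$- or $\mathbb{H}$-structure would force the real representation to be non-self-dual or to have even-dimensional commutant in a way incompatible with a split semisimple group of the given type). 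Granting that $\mathrm{End}_{\mathbf{G}(\mathbb{R})}(\mathbb{R}^n) = \mathbb{R} I_n$, we get $D \hookrightarrow \mathbb{R} I_n$, so $D = \mathbb{Q} I_n$, i.e. every $\lambda \in C(\Gamma)$ is a rational scalar $c I_n$.

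Finally I would use the integrality constraint: $\lambda = c I_n \in M(n\times n, \mathbb{Z})$ forces $c \in \mathbb{Z}$, and $\det \lambda = c^n \neq 0$ forces $c \neq 0$. Hence $C(\Gamma) = \{k I_n : k \in \mathbb{Z}, k \neq 0\}$, and conversely every such scalar matrix obviously lies in $C(\Gamma)$, giving the equality.

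The main obstacle is the step ruling out $D = \mathbb{C}$ or $D = \mathbb{H}$ — i.e. showing the commutant collapses all the way to the scalars rather than merely to a division algebra. The hypothesis "no compact factor" is exactly what is needed here, since a representation with complex or quaternionic commutant would restrict to a compact subgroup behavior obstruction, but making this precise requires a small argument with the structure of real semisimple groups (or, more elementarily, noting that $\Gamma \subset GL(n,\mathbb{Z})$ together with Zariski density forces enough diagonalizable elements with distinct real eigenvalues to preclude an invariant complex structure). I would likely phrase this via: the Zariski closure contains an $\mathbb{R}$-split torus acting with a nontrivial weight decomposition, and a nonzero equivariant $\lambda$ must preserve each weight space, so on generic weight lines $\lambda$ acts by a scalar; equivariance under the unipotent elements connecting these weight spaces then forces a single common scalar, which is automatically real.
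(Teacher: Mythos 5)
Your overall strategy matches the paper's: both proofs are Schur's-lemma arguments that pass from $\Gamma$ to its Zariski closure $\mathbf{G}$ and try to conclude that the commutant is $\mathbb{Q} I_n$, with integrality then giving $\mathbb{Z} I_n \setminus\{0\}$. You package this in the division-algebra language while the paper speaks of block decompositions of $\eta$, but the mathematical content is the same.

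The step you yourself single out as the ``main obstacle,'' however, is a genuine gap, and none of your proposed repairs closes it. The hypothesis ``no compact factor'' does \emph{not} prevent a complex (or quaternionic) commutant. Concretely, take $\Gamma = SL_2(\mathbb{Z}[i])$ embedded in $GL_4(\mathbb{Z})$ via the identification $\mathbb{Z}[i]^2 \cong \mathbb{Z}^4$. Its Zariski closure is $\mathrm{Res}_{\mathbb{Q}(i)/\mathbb{Q}}SL_2$, a connected semisimple $\mathbb{Q}$-group with real points $SL_2(\mathbb{C})$ and no compact factor, and one checks using the unipotents in $\Gamma$ that $\mathbb{Q}^4$ is $\mathbb{Q}$-irreducible. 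Nevertheless the $4\times 4$ integer matrix of multiplication by $i$ lies in $C(\Gamma)$, has nonzero determinant, and is not an integer multiple of $I_4$. In this example the representation \emph{is} $\mathbb{R}$-irreducible (as your first justification hopes) but the real commutant is $\mathbb{C}$, not $\mathbb{R}$, so ``real Schur $+$ no compact factor'' does not cut $D$ down to $\mathbb{Q}$. Your split-torus fallback also fails here: the real weight spaces of the $\mathbb{R}$-split torus of $SL_2(\mathbb{C})$ are the complex lines $\mathbb{C} e_j$, which are $2$-dimensional over $\mathbb{R}$, so a commuting endomorphism (e.g.\ $J = i$) need not act as a real scalar on them. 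Likewise ``the Galois group acting on the $\mathbb{R}$-constituents would have to be trivial'' is false for restriction-of-scalars groups. You should note that this is also precisely where the paper's own proof is thinnest: the sentence ``by the irreducibility on $\mathbb{Q}^n$, the multiplying constants for different blocks should be equal'' is not justified, and the example above shows that the stated hypotheses alone do not suffice --- some additional assumption such as absolute irreducibility of the representation (or $\mathbf{G}$ being $\mathbb{Q}$-split) appears to be used implicitly. Any correct proof must pin this down rather than invoke ``no compact factor.''
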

\begin{proof}
Assume $\eta\in C(\Gamma)$. Let $H$ be the Zariski closure of $\Gamma$. Then by assumptions, $H$ is a semisimple group in $GL(n,\mathbb R)$ and $\eta\circ h=h\circ \eta$ for any $h\in H$. Since they are matrix Lie groups, then after conjugation simultaneously, $\eta$ is a diagonal block matrix of the diagonal form as $H$. For each simple block matrix, the corresponding $\eta$ must be a constant multiple of Identity. By the irreducibility on $\mathbb Q^n$, the multiplying constants for different blocks should be equal. Therefore $\eta$ is a constant multiple of $I_n$. Since $\eta\in M(n\times n,\mathbb Z)$, $\eta=k I_n$ for some nonzero $ k\in\mathbb Z$.
\end{proof}

We first consider orbit closures on product spaces. For any $r\ge 1$, we say $x_1,\cdots,x_r$ are {\it rationally dependent}, if there exists $a_1,\cdots,a_r\in\mathbb Z$ such that $\sum_{i=1}^ra_ix_i\in\mathbb Q^n/\mathbb Z^n$; otherwise, $x_1,\cdots,x_r$ are {\it rationally independent}.

\begin{theorem}\label{pt1}
Let $n,\Gamma,C(\Gamma)$ be as in Lemma \ref{to1}. Consider $\Gamma$ acting on $\mathbb T^n$ naturally by automorphisms. Let $x, y$ be any two points in $\mathbb T^n$, then exactly one of the following holds:
\begin{itemize}
\item[(1)] $x\in\mathbb Q^n/\mathbb Z^n$, and $y\in\mathbb Q^n/\mathbb Z^n$. The $\Gamma$ orbit closure of $(x,y)$ is discrete and hence finite;
\item[(2)] only one of $x,y$ is in $\mathbb Q^n/\mathbb Z^n$. The $\Gamma$ orbit closure of $(x,y)$ is a direct product of a finite orbit with $\mathbb T^n$;
\item[(3)] $x,y$ are rationally dependent. The $\Gamma$ orbit closure of $(x,y)$ is a finite union of rational translations of $(\phi_{\lambda,\theta})(\mathbb T^n)$ for some $\lambda,\theta\in C(\Gamma)$, where $\phi_{\lambda,\theta}:\mathbb T^n\to \mathbb T^n\times \mathbb T^n$ is defined by $\phi_{\lambda,\theta}(x)=(\lambda x,\theta x)$;
\item[(4)] $x,y$ are rationally independent. The $\Gamma$ orbit closure of $(x,y)$ is $\mathbb T^n\times\mathbb T^n$.
\end{itemize}
\end{theorem}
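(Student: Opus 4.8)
The plan is to derive the theorem from Benoist--Quint's orbit closure result on $\mathbb T^{2n}$ (Theorem~\ref{bq2-0}) together with a complete list of the $\Gamma$-invariant subtori of $\mathbb T^n\times\mathbb T^n$, the latter coming from Lemma~\ref{to1}. First I would note that the diagonal $\Gamma$-action on $\mathbb T^n\times\mathbb T^n\cong\mathbb T^{2n}$ is the action of $\{\operatorname{diag}(\gamma,\gamma):\gamma\in\Gamma\}<GL(2n,\mathbb Z)$, whose Zariski closure is the diagonal copy of the Zariski closure $H$ of $\Gamma$ and is therefore again semisimple, Zariski connected, with no compact factor (it is \emph{not} irreducible on $\mathbb Q^{2n}$, so one may invoke Theorem~\ref{bq2-0} but not Theorem~\ref{bq2}). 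Hence $F:=\overline{\Gamma.(x,y)}$ is a finite homogeneous union of rational affine submanifolds of $\mathbb T^{2n}$; unwinding this, $F=(x,y)+S$ for a closed $\Gamma$-invariant subgroup $S\le\mathbb T^{2n}$, its identity component $V:=S^{\circ}$ is a $\Gamma$-invariant subtorus, and each of the finitely many $V$-cosets composing $F$ is a rational translate of $V$ (the last clause uses the remark following Theorem~\ref{bq2-0}).

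Next I would classify $V$. A $\Gamma$-invariant subtorus of $\mathbb T^{2n}$ is the same thing as a rational $\Gamma$-invariant subspace $W\le\mathbb R^n\oplus\mathbb R^n$ (with $\gamma$ acting diagonally), i.e. a $\mathbb Q[\Gamma]$-submodule of $\mathbb Q^n\oplus\mathbb Q^n$. I would first upgrade Lemma~\ref{to1} to $\operatorname{End}_{\mathbb Q[\Gamma]}(\mathbb Q^n)=\mathbb Q I_n$: given $\eta\in\operatorname{End}_{\mathbb Q[\Gamma]}(\mathbb Q^n)$, the matrix $\eta+cI_n$ is still $\Gamma$-equivariant, has rational entries, and is invertible for all but finitely many $c\in\mathbb Q$, so after clearing denominators Lemma~\ref{to1} forces $\eta+cI_n$ to be a scalar, hence so is $\eta$. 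Given this and the irreducibility of $\mathbb Q^n$, the $\mathbb Q[\Gamma]$-submodules of $\mathbb Q^n\oplus\mathbb Q^n$ are exactly $0$, $\mathbb Q^n\oplus0$, $0\oplus\mathbb Q^n$, the graphs $\{(v,\tfrac{p}{q}v):v\in\mathbb Q^n\}$ with $p,q$ coprime nonzero integers, and $\mathbb Q^n\oplus\mathbb Q^n$; correspondingly $V$ is $\{0\}$, $\mathbb T^n\oplus0$, $0\oplus\mathbb T^n$, $\phi_{qI_n,pI_n}(\mathbb T^n)$ with $\phi_{qI_n,pI_n}(t)=(qt,pt)$ and $qI_n,pI_n\in C(\Gamma)$, or $\mathbb T^{2n}$.

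Finally I would match these five possibilities to the four cases, using two facts: (i) for $z\in\mathbb T^n$, by Theorem~\ref{bq2}(1) and the rationality remark, $\overline{\Gamma z}$ is finite iff $z\in\mathbb Q^n/\mathbb Z^n$ and equals $\mathbb T^n$ otherwise; (ii) a continuous $\Gamma$-equivariant map sends $F$ onto the orbit closure of the image point --- to be applied to the coordinate projections $p_1,p_2\colon\mathbb T^{2n}\to\mathbb T^n$ and, in the graph case, to the $\Gamma$-equivariant character $\chi(u,w)=pu-qw$, which kills $V$ and so has finite image on $F$. This yields: $V=0\iff F$ finite $\iff x,y$ both rational (case (1)); $V=\mathbb T^n\oplus0$ or $0\oplus\mathbb T^n\iff$ exactly one of $x,y$ rational, and then $S\supseteq V$ plus $\Gamma$-invariance forces $F=\overline{\Gamma x}\times\mathbb T^n$ or $\mathbb T^n\times\overline{\Gamma y}$ (case (2)); $V=\phi_{qI_n,pI_n}(\mathbb T^n)\iff$ neither $x$ nor $y$ rational but $px-qy\in\mathbb Q^n/\mathbb Z^n$, i.e. $x,y$ rationally dependent with neither rational, and then $F$ is a finite union of rational translates of $\phi_{qI_n,pI_n}(\mathbb T^n)$ (case (3)); $V=\mathbb T^{2n}\iff x,y$ rationally independent, with $F=\mathbb T^{2n}$ (case (4)). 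Since the four conditions on $(x,y)$ are mutually exclusive and exhaust $\mathbb T^n\times\mathbb T^n$, this simultaneously establishes that exactly one case holds and identifies $F$ in each.

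I expect the main obstacle to be the middle step --- the classification of the $\Gamma$-invariant subtori of $\mathbb T^{2n}$, which requires genuinely promoting Lemma~\ref{to1} to a statement about the full rational commutant of $\Gamma$ and then, in case (3), pulling out the precise relation $px-qy\in\mathbb Q^n/\mathbb Z^n$ from the shape of $V$ via the equivariant character $\chi$ and the finiteness criterion; the remaining manipulations of subtorus cosets and projections should be routine.
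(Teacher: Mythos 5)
Your proof is correct and follows essentially the same route as the paper: invoke Theorem~\ref{bq2-0} for the diagonal $\Gamma$-action on $\mathbb T^{2n}$ to reduce to classifying the $\Gamma$-invariant rational affine submanifolds, and use Lemma~\ref{to1} to pin these down. The paper handles case (3) by the coordinate change $x=az$, $y=bz+q_1$ and asserts the orbit closure directly, whereas you make the underlying classification of $\Gamma$-invariant subtori of $\mathbb T^{2n}$ explicit through the $\mathbb Q[\Gamma]$-submodule structure of $\mathbb Q^n\oplus\mathbb Q^n$ (after upgrading Lemma~\ref{to1} to $\operatorname{End}_{\mathbb Q[\Gamma]}(\mathbb Q^n)=\mathbb Q I_n$); this is a more careful rendering of the paper's ``it is easy to see'' step rather than a different argument.
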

\begin{proof}
By Theorem \ref{bq2-0}, it is known that the $\Gamma$ orbit closure of $(x,y)$ is a finite union of affine manifold. By replacing $\Gamma$ by its finite index subgroup $\Gamma'$, we have that the $\Gamma'$ orbit closure of $(x,y)$ is an affine manifold. The cases (1) and (2) are straightforward. 

Now we turn to (3) first. When $x,y$ are rationally dependent, then there is a $z\in\mathbb T^n$ such that $x=az$ and $y=bz+q_1$ where $a,b\in \mathbb Z$ with $(a,b)=1$ and $q_1\in\mathbb Q^n/\mathbb Z^n$. Then the $\Gamma$ orbit closure of $(x,y)$ reduces to $\Gamma'$ orbit closure of $(az,bz)$. As $z$ is not a rational point, it is easy to see that the latter is $(\phi_{\lambda,\theta})(\mathbb T^n)$ with $\lambda=aI_n$ and $\theta=bI_n$.

When $x,y$ are rationally independent, the orbit closure is the product space, since there is no $\Gamma$-invariant affine submanifold containing $(x,y)$. This yields (4).
\end{proof}

\begin{corollary}
Let $n,C(\Gamma)$ be as in Lemma \ref{to1}. Consider $\Gamma$ acting on $\mathbb T^n$ naturally by automorphisms with the Lebesgue measure $m$, then there are 2 types of ergodic self joinings of this action:
\begin{itemize}
\item[(1)] $m\times m$, the product of Lebesgue measures on $\mathbb T^n\times \mathbb T^n$;
\item[(2)] average of finitely many translations of $(\phi_{\lambda,\theta})_*(m)$, where $\lambda,\theta\in C(GL(n,\mathbb Z))$, $\phi_{\lambda,\theta}:\mathbb T^n\to \mathbb T^n\times \mathbb T^n$ is defined by $\phi_{\lambda,\theta}(x)=(\lambda x,\theta x)$.
\end{itemize}
\end{corollary}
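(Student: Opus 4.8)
The plan is to mirror the proof of Proposition \ref{pro1}: combine a measure-rigidity input with the orbit-closure classification of Theorem \ref{pt1}. Let $\mu$ be an ergodic self joining, i.e. an ergodic measure for the diagonal $\Gamma$-action on $\mathbb T^n\times\mathbb T^n$ whose two marginals are both $m$. The diagonally embedded copy of $\Gamma$ in $GL(2n,\mathbb Z)$ has the same Zariski-closure properties (its Zariski closure is the diagonal image of that of $\Gamma$, hence semisimple, Zariski connected, with no compact factor), although it no longer acts irreducibly on $\mathbb Q^{2n}$. Since Theorem \ref{bq2-0} and its measure-theoretic counterpart (also due to Benoist--Quint, \cite{bq}) do not require irreducibility, I may conclude that $\mu$ is homogeneous: it is the uniform probability measure on a finite union of rational translates of a closed subgroup $H\le\mathbb T^n\times\mathbb T^n$, with $H$ invariant under some finite-index subgroup $\Gamma'\le\Gamma$ (which again has the same Zariski closure).

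Next I would pin down $H$ using the marginal conditions and the irreducibility of $\Gamma$ on $\mathbb Q^n$. Because $(\pi_i)_*\mu=m$ is non-atomic with full support, each projection $\pi_i(H)$ must be all of $\mathbb T^n$; in particular $\mu$ cannot be supported on orbit closures of types (1) or (2) of Theorem \ref{pt1}. If $H=\mathbb T^n\times\mathbb T^n$, then the only homogeneous measure with full support is $m\times m$, which is case (1) of the corollary. Otherwise $H$ is a proper closed subgroup surjecting onto both factors; since the only $\Gamma'$-invariant connected subtori of $\mathbb T^n$ are $0$ and $\mathbb T^n$, the kernels $\ker(\pi_i|_H)$ are finite, so $\dim H=n$ and the identity component $H^\circ$ is the image of $\mathbb T^n$ under a map $t\mapsto(\alpha t,\beta t)$ with $\alpha,\beta$ centralizing the $\Gamma'$-action; by (the argument of) Lemma \ref{to1}, $\alpha$ and $\beta$ are scalars, so after clearing denominators $H^\circ=\phi_{\lambda,\theta}(\mathbb T^n)$ for some $\lambda,\theta\in C(GL(n,\mathbb Z))$, and $\mu$ is the average of finitely many rational translates of $(\phi_{\lambda,\theta})_*m$ --- case (2). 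Equivalently, one can reach the same conclusion through Theorem \ref{pt1} directly: on the diagonal-$\Gamma$-invariant (hence, by ergodicity, $\mu$-conull) locus where $ax+by$ is rational for a fixed primitive $(a,b)$, apply the change of coordinates $M\otimes I_n$ with $M\in SL(2,\mathbb Z)$ extending $(a,b)$, which commutes with the diagonal action; this reduces $\mu$ to a measure supported on $F\times\mathbb T^n$ with $F$ a finite $\Gamma$-orbit of torsion points, and disintegrating over $F$ and applying the measure-rigidity counterpart of Theorem \ref{bq2} on the fibers yields the same description.

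The main obstacle is the measure-rigidity input together with the ergodic-theoretic bookkeeping surrounding it: one needs the Benoist--Quint classification of \emph{invariant} (not merely stationary) ergodic measures, both for the diagonal action on $\mathbb T^{2n}$ and, on the fibers of the second route, for the original action on $\mathbb T^n$, and one must check that the disintegration measures are ergodic for the relevant finite-index stabilizer before the classification can be applied to them. A secondary, purely bookkeeping, difficulty is identifying the homogeneous measure produced by the classification with the normalized form ``average of finitely many translations of $(\phi_{\lambda,\theta})_*m$'' asserted in the statement, and verifying that such measures genuinely have both marginals equal to $m$ --- which is exactly where the exclusion of the rational and mixed cases is used, since it forces the integers $a,b$ (equivalently the scalars $\lambda,\theta$) to be nonzero.
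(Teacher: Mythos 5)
Your argument is correct and follows the route the paper clearly intends but leaves implicit: the paper states this corollary without a proof, placing it directly after Theorem \ref{pt1}, so the intended derivation is exactly yours --- invoke the Benoist--Quint measure classification to see $\mu$ is Haar on a finite homogeneous union, use the marginal condition $(\pi_i)_*\mu=m$ to rule out the atomic and mixed cases, and use irreducibility over $\mathbb Q^n$ together with Lemma \ref{to1} to force the remaining proper subgroup to be the graph of a rational scalar, i.e.\ $\phi_{\lambda,\theta}(\mathbb T^n)$ up to finitely many rational translates. One small imprecision in your final paragraph: you say one needs the Benoist--Quint classification of \emph{invariant} (as opposed to merely stationary) ergodic measures, but this is not an extra ingredient --- any $\Gamma$-invariant probability measure is automatically $\nu$-stationary for every probability measure $\nu$ on $\Gamma$, so the stationary-measure classification in \cite{bq} already covers the invariant case directly.
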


\begin{theorem}\label{pt2}
Let $n,\Gamma,C(\Gamma)$ be as in Lemma \ref{to1}. Consider $\Gamma$ acting on $\mathbb T^n$ naturally by automorphisms. For any $k$, if $x_1,x_2,\ldots,x_k$ are rationally independent, then the orbit closure of $(x_1,x_2,\ldots,x_k)$ is $(\mathbb T^n)^k$.
\end{theorem}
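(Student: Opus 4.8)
The plan is to argue by induction on $k$, in the same spirit as the proof of Theorem \ref{pj-1}. The case $k=1$ is immediate: a single rationally independent point is not a torsion point, so its $\Gamma$-orbit is infinite and hence equals $\mathbb T^n$ by Theorem \ref{bq2}(1); the case $k=2$ is Theorem \ref{pt1}(4). So assume the statement holds through $k$, let $p=(x_1,\dots,x_{k+1})$ with $x_1,\dots,x_{k+1}$ rationally independent, and note that every sub-tuple is then rationally independent as well. By Theorem \ref{bq2-0} the $\Gamma$-orbit closure $W:=\overline{\Gamma.p}$ is a finite homogeneous union of rational affine submanifolds, so, passing to a suitable finite index subgroup $\Gamma'$ of $\Gamma$ (as in the proof of Theorem \ref{pt1}) — which has the same Zariski closure $H$ as $\Gamma$, since $H$ is connected, and therefore still satisfies the hypotheses of Lemma \ref{to1} — we may assume $W':=\overline{\Gamma'.p}$ is a single rational affine submanifold, say $W'=p+V$ with $V\subseteq(\mathbb T^n)^{k+1}$ a $\Gamma'$-invariant rational subtorus. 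Let $\widetilde V\subseteq(\mathbb Q^n)^{k+1}$ be the associated $\mathbb Q$-subspace; as $\Gamma'$ is Zariski dense in $H$, $\widetilde V$ is invariant under the diagonal $H$-action.

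Applying the induction hypothesis to $\Gamma'$ and $(x_1,\dots,x_k)$, the projection $\pi_k$ to the first $k$ coordinates maps $W'$ onto $(\mathbb T^n)^k$, so $\pi_k(\widetilde V)=(\mathbb Q^n)^k$. Since $H$ acts irreducibly on the last factor, the $H$-submodule $\widetilde V\cap(\{0\}^k\times\mathbb Q^n)$ is either the whole factor — in which case, combined with $\pi_k(\widetilde V)=(\mathbb Q^n)^k$, it follows that $\widetilde V=(\mathbb Q^n)^{k+1}$, hence $W=W'=(\mathbb T^n)^{k+1}$ and we are done — or $\{0\}$, in which case $\widetilde V$ is the graph of an $H$-equivariant linear map $\omega\colon(\mathbb Q^n)^k\to\mathbb Q^n$. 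It remains to rule out this graph case. Writing $\omega(v_1,\dots,v_k)=\sum_{i=1}^k \omega_i v_i$, each $\omega_i$ lies in $\mathrm{End}_H(\mathbb Q^n)$, which by Schur's lemma is a division $\mathbb Q$-algebra, so (clearing denominators and invoking Lemma \ref{to1}) $\omega_i=c_i I_n$ with $c_i\in\mathbb Q$.

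Now $\Gamma'$-invariance of $W'=p+V$ says $\gamma p-p\in V$ for every $\gamma\in\Gamma'$; writing this out with integer lifts $\hat x_i\in\mathbb R^n$ of the $x_i$ and using the graph form of $\widetilde V$, it becomes exactly $(\gamma-I)\bigl(\hat x_{k+1}-\sum_i c_i\hat x_i\bigr)\in\mathbb Q^n$ for all $\gamma\in\Gamma'$. Since $H$ acts on $\mathbb Q^n$ irreducibly and, as $n\ge2$, nontrivially, it has no nonzero fixed vector there, hence none in $\mathbb R^n/\mathbb Q^n\cong\mathbb Q^n\otimes_{\mathbb Q}(\mathbb R/\mathbb Q)$; therefore $\hat x_{k+1}-\sum_i c_i\hat x_i\in\mathbb Q^n$. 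Clearing a common denominator $q\ge1$ of the $c_i$ yields the integer relation $q\,x_{k+1}-\sum_i(qc_i)x_i\in\mathbb Q^n/\mathbb Z^n$, whose coefficient vector $(-qc_1,\dots,-qc_k,q)$ is nonzero; this contradicts the rational independence of $x_1,\dots,x_{k+1}$. So the graph case cannot occur, whence $\widetilde V=(\mathbb Q^n)^{k+1}$ and $W\supseteq W'=(\mathbb T^n)^{k+1}$, completing the induction.

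The delicate step is the analysis of the graph case: one has to pass carefully between $\mathbb T^n$ and its universal cover to turn the $\Gamma'$-invariance of the affine submanifold into an honest integer-coefficient relation among the $x_i$, and the two inputs that make such a relation forced rather than merely consistent are Lemma \ref{to1}, which pins the equivariant map down to rational scalars, and the absence of $H$-fixed vectors, which comes from irreducibility together with $n\ge2$.
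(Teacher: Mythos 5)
Your proof is correct. The paper's own proof is a one-sentence assertion: ``It follows from Theorem \ref{bq2} and the fact that there is no invariant affine submanifold containing the point $(x_1,\ldots,x_k)$, when $x_1,\ldots,x_k$ are rationally independent.'' It does not justify that ``fact.'' What you have written is, in effect, a full proof of that omitted assertion, organized as an induction on $k$ modeled on the paper's proof of Theorem \ref{pj-1} for the homogeneous-space case. Your reduction (Benoist--Quint gives a rational affine orbit closure; pass to a finite-index $\Gamma'$ to isolate one component $p+V$; use the induction hypothesis to make the first-$k$ projection onto; use irreducibility on the last factor to force $\widetilde V$ to be either everything or a graph; use Schur plus Lemma \ref{to1} to identify an equivariant graph as a rational scalar map; lift to $\mathbb R^n$ and use absence of $H$-fixed vectors in $\mathbb R^n/\mathbb Q^n$ to convert invariance of the graph into an integer relation contradicting rational independence) is sound at every step, and the ``no nonzero fixed vector in $\mathbb Q^n\otimes_{\mathbb Q}(\mathbb R/\mathbb Q)$'' observation is exactly the right way to get from the ``$(\gamma-I)w\in\mathbb Q^n$ for all $\gamma$'' condition to $w\in\mathbb Q^n$. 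In short: same high-level strategy as the paper, but you supply the nontrivial content that the paper compresses into an unproved ``fact''; the paper's version buys brevity at the cost of leaving the reader to reconstruct precisely the argument you gave.
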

\begin{proof}
It follows from Theorem \ref{bq2} and the fact that there is no invariant affine submanifold containing the point $(x_1,x_2,\ldots,x_k)$, when $x_1,x_2,\ldots,x_k$ are rationally independent.
\end{proof}

\section{Proof of Theorem \ref{t1}}

Let $A$ be an arbitrary infinite subset.

Let $(x\Gamma),(y\Gamma)$ be two points on $G/\Gamma$. Consider the equivalence relation $\sim$: $(x\Gamma)\sim (y\Gamma)$ if there exists a $\gamma\in Comm(\Gamma)$ such that $(x\Gamma)= (y\gamma\Gamma)$. Notice that by Theorem \ref{pj-1}, only if $(x\Gamma) \sim (y\Gamma)$, the orbit closure of $(x\Gamma,y\Gamma)$ under $G$ will be a graph as described before. Now, we can partite $A$ into subsets $\{A_1,A_2,\ldots,A_i,\ldots\}_{i\in I}$, such that each $A_i$ contains points in one equivalence class. 

If $Card(I)=\infty$, then we can get an infinite subset $\hat A\subset A$ by simply choosing one point from each subset, say choose $a_i\in A_i$. For any $\ell>0$, the orbit closure of $(a_1,\ldots,a_\ell)\in (G/\Gamma)^{\ell}$ is $(G/\Gamma)^{\ell}$. Now for any $\epsilon>0$, let $\ell$ be great enough, then there exists $g\in G$ such that the subset $g\{a_1,\ldots,a_\ell\}=\{ga_1,\ldots,ga_\ell\}$ is $\epsilon$-dense. Therefore the set $g(A)$ is also $\epsilon$-dense. We are done in this case. Let's remark that if $\Gamma$ is not arithmetic, then $Card(I)=\infty$.

If $Card(I)<\infty$, since $A$ is an infinite subset, there exists $i\in I$ such that $A_i$ also contains infinite many points. Thus without loss of generality, afterwards assume $A$ contains points in one equivalence class. As $G$ acts transitively on $G/\Gamma$, assume that $A=\{(\Gamma),(\gamma_1\Gamma),\ldots\ldots\}$ where $\gamma_i\in Comm(\Gamma)$, and the point $(\Gamma)\in G/\Gamma$ is the only accumulating point of $A$.

\begin{lemma}\label{k1}
For any $\ell>0$, any open subset $U\subset (G/\Gamma)^\ell$, there exist a $g\in G$ and $(b_1\Gamma,\ldots,b_\ell\Gamma)\in (G/\Gamma)^\ell$ with $(b_i\Gamma)\in A$, such that $g(b_1\Gamma,\ldots,b_\ell\Gamma)\in U$.
\end{lemma}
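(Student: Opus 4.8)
\section*{Proof proposal}

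The plan is to recast the lemma as a density statement about unions of orbit closures and then prove it by induction on $\ell$, the inductive step resting on the Benoist--Quint dichotomy (Theorem \ref{bq1}), applied not on $G/\Gamma$ itself but on suitable finite covers of it. First, since boxes form a basis it suffices to treat $U=U_1\times\cdots\times U_\ell$ with each $U_j$ nonempty open. Put $\gamma_0:=e$ (so $(\Gamma)=\gamma_0\Gamma\in A$), and for a tuple $\vec\imath=(i_1,\dots,i_\ell)$ of pairwise distinct indices write $\bar x_{\vec\imath}=(\gamma_{i_1}\Gamma,\dots,\gamma_{i_\ell}\Gamma)$ and $W_{\vec\imath}=\overline{G\cdot\bar x_{\vec\imath}}$. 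Because $G\cdot\bar x_{\vec\imath}$ is dense in $W_{\vec\imath}$ and $U$ is open, the lemma is equivalent to: $\mathcal W_\ell:=\bigcup_{\vec\imath}W_{\vec\imath}$ is dense in $(G/\Gamma)^\ell$ (if $\mathcal W_\ell\cap U\neq\emptyset$ then some orbit $G\cdot\bar x_{\vec\imath}$ meets $U$, yielding $g\in G$ with $g\bar x_{\vec\imath}\in U$ and $b_j=\gamma_{i_j}$, $b_j\Gamma\in A$). I will prove density of $\mathcal W_\ell$ by induction on $\ell$; for $\ell=1$ it is immediate since $\overline{G\gamma_i\Gamma}=G/\Gamma$.

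For the inductive step, conjugating by $\gamma_{i_1}^{-1}\in G$ gives $W_{\vec\imath}=\overline{G\cdot(\Gamma,\eta_2\Gamma,\dots,\eta_\ell\Gamma)}$ with $\eta_j:=\gamma_{i_1}^{-1}\gamma_{i_j}\in Comm(\Gamma)$. Since each $\eta_j\Gamma$ has finite $\Gamma$-orbit (Lemma \ref{le1}), a short limiting argument (write points near $(\Gamma)$ in the first coordinate as $h\delta$ with $h\to e$, $\delta\in\Gamma$, and use finiteness of the relevant $\Gamma$-orbit) shows that $W_{\vec\imath}$ fibers over $G/\Gamma$ via the first coordinate with finite fibers, the fiber over $(\Gamma)$ being exactly the finite set $\Delta(\Gamma)\cdot(\eta_2\Gamma,\dots,\eta_\ell\Gamma)$ for the diagonal $\Gamma$-action. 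Hence $\mathcal W_\ell$ is dense in $(G/\Gamma)^\ell$ once $\mathcal E_\ell:=\bigcup_{\vec\imath}\Delta(\Gamma)\cdot(\gamma_{i_1}^{-1}\gamma_{i_2}\Gamma,\dots,\gamma_{i_1}^{-1}\gamma_{i_\ell}\Gamma)$ is dense in $(G/\Gamma)^{\ell-1}$: given a target, pick $g_0$ with $g_0\Gamma$ near the first coordinate; the fibers of the various $W_{\vec\imath}$ over $g_0\Gamma$ sweep out $g_0\cdot\mathcal E_\ell$, which is dense, so $\mathcal W_\ell$ meets any neighbourhood of the target.

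It remains to prove $\mathcal E_\ell$ dense. The key observation is $\Delta(\Gamma)\cdot(\gamma_{i_1}^{-1}\gamma_{i_2}\Gamma,\dots,\gamma_{i_1}^{-1}\gamma_{i_\ell}\Gamma)=(\Gamma\gamma_{i_1}^{-1})\cdot(\gamma_{i_2}\Gamma,\dots,\gamma_{i_\ell}\Gamma)$. So, fixing pairwise distinct $i_2,\dots,i_\ell$ and letting $i_1$ range over the remaining indices, the corresponding part of $\mathcal E_\ell$ is $\mathcal H\cdot\bar q$, where $\bar q=(\gamma_{i_2}\Gamma,\dots,\gamma_{i_\ell}\Gamma)$ and $\mathcal H:=\bigcup_{i_1}\Gamma\gamma_{i_1}^{-1}$. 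Now $C:=\overline{\mathcal H\cdot\bar q}$ is a closed, diagonally $\Gamma$-invariant subset of the orbit closure $W:=\overline{G\cdot\bar q}$, and $W$ is $G$-equivariantly identified with $G/\hat\Gamma$ for an irreducible cocompact lattice $\hat\Gamma=\bigcap_j\gamma_{i_j}\Gamma\gamma_{i_j}^{-1}$ commensurable with $\Gamma$, the diagonal $\Gamma$-action becoming left translation. The first-coordinate projection of $C$ equals $\overline{\bigcup_{i_1}\Gamma\gamma_{i_1}^{-1}\gamma_{i_2}\Gamma}$, i.e.\ the closure of a union of finite $\Gamma$-orbits of the points $\gamma_{i_1}^{-1}\gamma_{i_2}\Gamma\in Comm(\Gamma)\cdot\Gamma$; an elementary argument (if only finitely many of these orbits were distinct, then, using again that all $\gamma_i$ and $\gamma_{i_1}^{-1}\gamma_{i_2}$ lie in $Comm(\Gamma)$ so have finite $\Gamma$-orbits, the set $\{\gamma_{i_1}\Gamma\}$ would be finite, contradicting that $A$ is infinite) shows infinitely many are distinct, so this projection is all of $G/\Gamma$ by Theorem \ref{bq1}(2); in particular $C$ is infinite. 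But by Theorem \ref{bq1}(1)--(2) applied to $\Gamma$ (a Zariski dense subgroup, by Borel density) acting on $G/\hat\Gamma$, every closed $\Gamma$-invariant subset of $G/\hat\Gamma$ is finite or everything; hence $C=W=\overline{G\cdot(\gamma_{i_2}\Gamma,\dots,\gamma_{i_\ell}\Gamma)}$. Taking the union over all pairwise distinct $(i_2,\dots,i_\ell)$ gives $\overline{\mathcal E_\ell}\supseteq\bigcup_{(i_2,\dots,i_\ell)}\overline{G\cdot(\gamma_{i_2}\Gamma,\dots,\gamma_{i_\ell}\Gamma)}=\mathcal W_{\ell-1}$, which is dense in $(G/\Gamma)^{\ell-1}$ by the induction hypothesis. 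Thus $\mathcal E_\ell$, and therefore $\mathcal W_\ell$, is dense, completing the induction.

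The main obstacle is precisely this last step: each individual $W_{\vec\imath}$ is a proper subset (finite-to-one over $G/\Gamma$), so density of the union $\mathcal W_\ell$ is a genuine statement about how the family of ``graphs'' $W_{\vec\imath}$ moves, and the naive hope that a sequence of distinct finite $\Delta(\Gamma)$-orbits in $(G/\Gamma)^{\ell-1}$ equidistributes is false for $\ell\ge 3$ ($\Delta(\Gamma)$ is not Zariski dense in $G^{\ell-1}$). The passage through the finite covers $G/\hat\Gamma$ — where $\Gamma$ \emph{is} Zariski dense and Theorem \ref{bq1} applies, so that ``dense first-coordinate projection'' upgrades to ``all of the orbit closure'' — is what makes the induction go through; a secondary point requiring care is the elementary counting of distinct finite $\Gamma$-orbits among the $\gamma_{i_1}^{-1}\gamma_{i_2}\Gamma$.
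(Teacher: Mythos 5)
Your proof is correct, and it uses the same key tool as the paper (Theorem \ref{bq1}, equidistribution of finite orbits of commensurator points), but the induction is organized quite differently. The paper's induction is local and adaptive: having placed $g_0(b_1\Gamma,\dots,b_{k-1}\Gamma)\in U_1\times\cdots\times U_{k-1}$ by the inductive hypothesis, it passes to the stabilizer $g_0\Gamma_{k-1}g_0^{-1}$ of that already-placed point --- a lattice commensurable with $\Gamma$, since the $G$-orbit closure of the partial tuple is homogeneous --- and applies Theorem \ref{bq1}(2) directly to this subgroup acting on $G/\Gamma$ to find the next coordinate $b_k\Gamma\in A$ landing in $U_k$, without disturbing the first $k-1$ coordinates. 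Your proof is global: you show the union of orbit closures $\mathcal W_\ell$ is dense by fibering over the first coordinate, reduce to density of the fiber set $\mathcal E_\ell$, and then, for fixed $(i_2,\dots,i_\ell)$, identify $W=\overline{G\cdot\bar q}$ with the finite cover $G/\hat\Gamma$ and upgrade ``$C$ is infinite'' to ``$C=W$'' using the closed-invariant-set dichotomy that follows from Theorem \ref{bq1}(1)--(2) for $\Gamma$ acting on $G/\hat\Gamma$. Both routes are sound; the paper's is shorter because the stabilizer trick lets it invoke part (2) just once per step directly on $G/\Gamma$, whereas you invoke the theorem twice (once to show the first-coordinate projection of $C$ is onto, and once for the dichotomy on $G/\hat\Gamma$), in exchange proving the slightly stronger statement that $\mathcal W_\ell$ itself is dense and making the role of the finite covers $G/\hat\Gamma$ explicit. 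Your elementary counting that infinitely many of the orbits $\Gamma\gamma_{i_1}^{-1}\gamma_{i_2}\Gamma$ are distinct is a genuine step that needs the observation you make (each such orbit, being a $\gamma_{i_2}\Gamma\gamma_{i_2}^{-1}$-translate class, can absorb only finitely many $\gamma_{i_1}\Gamma$), and it is handled correctly.
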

\begin{proof}
It suffices to prove the case when $U=U_1\times \cdots\times U_\ell$, where $U_i\subset G/\Gamma$ is an open subset. We prove this by induction on $\ell$.

When $\ell=1$, since $G$ action on $G/\Gamma$ is minimal, any point in $A$ works.

Assume that when $\ell=k-1\ge 1$, the lemma is true. Now we prove it for $\ell=k$. Let $U=U_1\times \cdots\times U_k$ be an arbitrary open set. Apply the case $\ell=k-1$ for the first $k-1$ product $U_1\times \cdots\times U_{k-1}$, we thus obtain $(b_1\Gamma,\ldots,b_{k-1}\Gamma)\in (G/\Gamma)^{k-1}$ with $(b_i\Gamma)\in A$, and $g_0(b_1\Gamma,\ldots,b_{k-1}\Gamma)\in U_1\times \cdots\times U_{k-1}$ for some $g_0\in G$. Notice that the $G$ orbit closure of $(b_1\Gamma,\ldots,b_{k-1}\Gamma)$ is essentially a homogeneous $G$-space, then the stabilizer of the point $g_0(b_1\Gamma,\ldots,b_{k-1}\Gamma)$ is a discrete group $g_0\Gamma_{k-1}g_0^{-1}$, where $\Gamma_{k-1}$ is a finite index subgroup of $\Gamma$. Here $g_0\Gamma_{k-1}g_0^{-1}$ is still a cocompact lattice.

Let $\mathcal A_k:=A\backslash\{b_i\Gamma:1\le i\le k-1\}$, then $Card(\mathcal A_k)=\infty$. By Theorem \ref{bq1}, it follows that there is a $(b_k\Gamma)\in\mathcal A_k$ such that $$\left(g_0\Gamma_{k-1}g_0^{-1}(g_0b_k\Gamma)\right)\cap U_k\neq \emptyset.$$
That is there is an element $g_1\in g_0\Gamma_{k-1}g_0^{-1}$ such that $g_1g_0b_k\Gamma\in U_k$. Hence we have $$g_1g_0(b_1\Gamma,\ldots,b_{k-1}\Gamma,b_k\Gamma)\in U_1\times \cdots\times U_{k-1}\times U_k=U,$$which completes the induction.
\end{proof}

We continue the proof of Theorem \ref{t1}. Let $\pi_\ell$ be the map from $(G/\Gamma)^\ell$ to $\mathcal K(G/\Gamma)$, the space of subsets of $G/\Gamma$, defined by $\pi_\ell(x_1,\ldots,x_\ell)=\{x_1,\ldots,x_\ell\}$. Observe that for any $\epsilon>0$, as $\ell$ large enough, there is an open subset $U\subset (G/\Gamma)^\ell$ such that $\pi_\ell(\bar x)$ is $\epsilon$-dense for any $\bar x\in U$. Therefore applying Lemma \ref{k1}, Theorem \ref{t1} follows.

\section{Proof of Theorem \ref{t2}}

The proof is similar to that of Theorem \ref{t1}. However, since the orbit closure is quite involved, the argument is much more complicated.

Let $A$ be an arbitrary infinite subset of $\mathbb T^n$. Without loss of generality, assume $A$ is countable, and denote $A=\{a_1,\cdots, a_i,\cdots\}_{i\in \mathbb N}$. For any $\ell\ge 1$, let $d_\ell$ be the dimension of the linear $\mathbb Q$-spanning space of $\{a_1,\cdots, a_\ell\}$. If for all $1\le i\le \ell$, $a_i\in\mathbb Q^n/\mathbb Z^n $, then $d_\ell=0$. Note that $d_\ell$ is increasing if $\ell$ increases. Therefore the limit $\lim_{\ell\to\infty}d_\ell$ exists (possibly $\infty$). Let $r=r(A)=\lim_{\ell\to\infty}d_\ell$, we have $r\in \mathbb N\cup\{0,\infty\}$. We split the proof in the following three cases.

\textbf{Case 1: }$r=\infty$. Then for any $\ell\ge 1$, one can pick a subset $\{b_1,\cdots, b_\ell\}$ from $A$, such that the points $b_1,\cdots, b_\ell$ are rationally independent. By Theorem \ref{pt2}, the $\Gamma$ orbit closure of $(b_1,\cdots, b_\ell)$ is $(\mathbb T^n)^\ell$. Therefore, for any $\epsilon>0$, one can choose $\ell$ large enough and the points $b_1,\cdots, b_\ell$ from $A$ such that, there is a $\gamma\in\Gamma$ with the property that the set $\gamma\{b_1,\cdots, b_\ell\}$ is $\epsilon$-dense. We are done.

\textbf{Case 2: }$r=0$. In this case $A\subset \mathbb Q^n/\mathbb Z^n$. We will need the following useful result.

\begin{lemma}\label{tl1}
For any $\ell>0$, any open subset $U\subset (\mathbb T^n)^\ell$, there exist a $g\in \Gamma$ and $(b_1,\ldots,b_\ell)\in (\mathbb T^n)^\ell$ with $(b_i)\in A$, such that $g(b_1,\ldots,b_\ell)\in U$.
\end{lemma}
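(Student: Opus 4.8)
The plan is to mimic the induction proving Lemma \ref{k1}, replacing the homogeneous-space input (Theorem \ref{bq1}) by its torus analogue (Theorem \ref{bq2} part (2)). The point is that in Case 2 all of $A$ lies in $\mathbb Q^n/\mathbb Z^n$, so every element of $A$ has finite $\Gamma$-orbit, and the relevant fact is that a sequence of \emph{distinct} finite orbits equidistributes towards $\mathbb T^n$. First I would reduce to the case $U = U_1 \times \cdots \times U_\ell$ with each $U_i \subset \mathbb T^n$ open and nonempty, since these form a basis of the product topology. Then I would induct on $\ell$.

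For the base case $\ell = 1$: pick any $a \in A$. Its $\Gamma$-orbit is finite, but if $A$ is infinite, $A \cap \mathbb Q^n/\mathbb Z^n$ contains points with arbitrarily large order, so I can select $a \in A$ whose $\Gamma$-orbit is as ``large'' as needed; by Theorem \ref{bq2}(2) applied to a sequence of such distinct finite orbits, for $a$ with orbit large enough the set $\Gamma a$ meets $U_1$. (Alternatively one notes directly that a single finite orbit of large enough cardinality must meet $U_1$, which is what equidistribution gives.) For the inductive step, assume the lemma for $\ell = k-1$. Given $U = U_1 \times \cdots \times U_k$, apply the case $k-1$ to $U_1 \times \cdots \times U_{k-1}$: obtain $(b_1,\ldots,b_{k-1})$ with each $b_i \in A$ and some $g_0 \in \Gamma$ with $g_0(b_1,\ldots,b_{k-1}) \in U_1 \times \cdots \times U_{k-1}$. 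Now consider $\mathcal A_k := A \setminus \{b_1,\ldots,b_{k-1}\}$, still infinite. For each $b \in \mathcal A_k$, the orbit of $(b_1,\ldots,b_{k-1},b)$ under the subgroup $\mathrm{Stab}(g_0(b_1,\ldots,b_{k-1}))$ — equivalently, after translating by $g_0$, the orbit of $g_0 b$ under the finite-index subgroup $\Gamma_{k-1} := \mathrm{Stab}_\Gamma(b_1,\ldots,b_{k-1})$ — is a finite orbit in $\mathbb T^n$ (as $g_0 b$ is rational). By Theorem \ref{bq2}(1), a finite-index subgroup of $\Gamma$ still has the dichotomy property, and by part (2) a sequence of distinct finite $\Gamma_{k-1}$-orbits equidistributes; since $\mathcal A_k$ is infinite and the points $g_0 b$ have unbounded order, one finds $b_k \in \mathcal A_k$ and $\gamma_1 \in \Gamma_{k-1}$ with $\gamma_1 g_0 b_k \in U_k$ (here one must check that the $\Gamma_{k-1}$-orbits of the $g_0 b$ are genuinely distinct for infinitely many $b$, or pass to a subsequence realizing distinct orbits). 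Then $\gamma_1 g_0 \in \Gamma$ and $\gamma_1 g_0 (b_1,\ldots,b_k) \in U$, completing the induction.

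The main obstacle I anticipate is the bookkeeping around finite orbits: unlike in Theorem \ref{t1}, distinct points of $A \subset \mathbb Q^n/\mathbb Z^n$ can share the same finite $\Gamma_{k-1}$-orbit, so to invoke Theorem \ref{bq2}(2) I must extract from the infinite set $\{g_0 b : b \in \mathcal A_k\}$ infinitely many points lying in \emph{distinct} orbits — which holds because orbits are finite and the set is infinite — and then argue that their orbits, being distinct finite orbits, have $\mathbb T^n$ as a Hausdorff limit, so infinitely many of them (in particular one) meet the open set $U_k$. A minor subtlety is that $\Gamma_{k-1}$ a priori satisfies the hypotheses of Theorem \ref{bq2} because it is finite index in $\Gamma$ (same Zariski closure, still acting irreducibly on $\mathbb Q^n$), which is exactly the ``in particular'' clause recorded in Theorem \ref{bq2}(1); I will cite that. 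Once Lemma \ref{tl1} is established, Theorem \ref{t2} in Case 2 follows exactly as Theorem \ref{t1} did: for $\epsilon > 0$ choose $\ell$ large and an open $U \subset (\mathbb T^n)^\ell$ on which the coordinate-set is $\epsilon$-dense, then apply the lemma to get $g \in \Gamma$ with $gA \supset g\{b_1,\ldots,b_\ell\}$ $\epsilon$-dense.
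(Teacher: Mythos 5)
Your proposal follows the paper's proof essentially step for step: reduce to boxes $U=U_1\times\cdots\times U_\ell$, induct on $\ell$, and at each stage invoke Theorem \ref{bq2}(2) on the infinitely many distinct finite orbits of a finite-index subgroup of $\Gamma$ to hit the new factor $U_k$. You also correctly flag the one subtlety the paper passes over silently, namely that the infinite set $\mathcal A_k$ really does give infinitely many \emph{distinct} finite orbits (because each orbit is finite while the set is infinite), so that part (2) of Theorem \ref{bq2} applies.

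There is, however, a small conjugation slip in your inductive step that, as written, makes the conclusion fail. You define $\Gamma_{k-1}:=\mathrm{Stab}_\Gamma(b_1,\ldots,b_{k-1})$, pick $\gamma_1\in\Gamma_{k-1}$ with $\gamma_1 g_0 b_k\in U_k$, and then assert $\gamma_1 g_0(b_1,\ldots,b_k)\in U$. But $\gamma_1$ stabilizes $(b_1,\ldots,b_{k-1})$, not $g_0(b_1,\ldots,b_{k-1})$, so there is no reason for $\gamma_1 g_0 b_i$ to remain in $U_i$ for $i<k$. The paper avoids this by working with the conjugate $g_0\Gamma_{k-1}g_0^{-1}=\mathrm{Stab}_\Gamma\bigl(g_0(b_1,\ldots,b_{k-1})\bigr)$: one finds $g_1\in g_0\Gamma_{k-1}g_0^{-1}$ with $g_1 g_0 b_k\in U_k$, and then $g_1 g_0 b_i=g_0 b_i\in U_i$ for $i<k$ automatically, so $g_1g_0(b_1,\ldots,b_k)\in U$. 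Equivalently, you could keep your $\gamma_1\in\Gamma_{k-1}$ but apply $g_0\gamma_1$ (so the target open set becomes $g_0^{-1}U_k$, still open). This is a local repair, not a change of strategy; with it, your argument coincides with the paper's.
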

\begin{proof}
The proof is similar to that of Lemma \ref{k1}. It suffices to prove the case when $U=U_1\times \cdots\times U_\ell$, where $U_i\subset \mathbb T^n$ is an open subset. We prove this by induction on $\ell$.

When $\ell=1$, since the orbit of any point in $A$ is finite and $Card(A)=\infty$, by Theorem \ref{bq2}, there is a $\Gamma$ orbit that intersects the fixed $U_1$. Therefore, one can pick this point and find an element of $\Gamma$, satisfying the lemma.

Assume that when $\ell=k-1\ge 1$, the lemma is true. Now we prove it for $\ell=k$. Let $U=U_1\times \cdots\times U_k$ be an arbitrary open set. Apply the case $\ell=k-1$ for the first $k-1$ product $U_1\times \cdots\times U_{k-1}$, we thus obtain $(b_1,\ldots,b_{k-1})\in (\mathbb T^n)^{k-1}$ with $b_i\in A$, and $g_0(b_1,\ldots,b_{k-1})\in U_1\times \cdots\times U_{k-1}$ for some $g_0\in \Gamma$. Notice that the $\Gamma$ orbit closure of $(b_1,\ldots,b_{k-1})$ is finite, then the stabilizer of the point $g_0(b_1,\ldots,b_{k-1})$ is a discrete group $g_0\Gamma_{k-1}g_0^{-1}$, where $\Gamma_{k-1}$ is a finite index subgroup of $\Gamma$. Here $g_0\Gamma_{k-1}g_0^{-1}$ is still a subgroup of $\Gamma$.

Let $\mathcal A_k:=A\backslash\{b_i:1\le i\le k-1\}$, then $Card(\mathcal A_k)=\infty$. Since $U_k$ is an open set, by Theorem \ref{bq2}, it follows that there is a $b_k\in\mathcal A_k$ such that $$\left(g_0\Gamma_{k-1}g_0^{-1}(g_0b_k)\right)\cap U_k\neq \emptyset.$$
That is there is an element $g_1\in g_0\Gamma_{k-1}g_0^{-1}$ such that $g_1g_0b_k\in U_k$. Hence we have $$g_1g_0\in\Gamma,\;\text{and }g_1g_0(b_1,\ldots,b_{k-1},b_k)\in U_1\times \cdots\times U_{k-1}\times U_k=U,$$which completes the induction.
\end{proof}

Let $\pi_\ell$ be the map from $(\mathbb T^n)^\ell$ to $\mathcal K(\mathbb T^n)$, the space of subsets of $\mathbb T^n$, defined by $\pi_\ell(z_1,\ldots,z_\ell)=\{z_1,\ldots,z_\ell\}$. For any $\epsilon>0$, let $\ell$ be large enough, then there exists an open subset $U\subset (\mathbb T^n)^{\ell}$, such that the subset $\pi_\ell(z_1,\ldots,z_\ell)$ is $\epsilon$-dense for any $(z_1,\ldots,z_\ell)\in U$. By applying Lemma \ref{tl1} with the $\ell$ and $U$, we are done.

\textbf{Case 3: }$1\le r<\infty$. One can pick a subset $\{z_1,\cdots,z_r\}$ of $r$ elements from $A$ such that $z_1,\cdots,z_r$ are rationally independent and any other point in $A$ is a $\mathbb Q$ combination of $z_1,\cdots,z_r$ and $\mathbb Q^n/\mathbb Z^n$. Without loss of generality, assume that \textcircled{1} $\{z_1,\cdots,z_r\}=\{a_1,\cdots, a_r\}$, and let $\mathcal A_r=A\backslash\{a_1,\cdots,a_r\}$. Denote $\mathbf a=(a_1,\cdots, a_r)$, then we can rewrite $a_i$ as $q_i^0+\langle\mathbf q_i, \mathbf a\rangle:=q_i^0+\sum_{j=1}^rq_i^ja_j$, where $q_i^0\in \mathbb Q^n/\mathbb Z^n$ and $\mathbf q_i=(q_i^1,\cdots,q_i^r)\in \mathbb Q^r$.

If $\mathcal A_r\cap \mathbb Q^n/\mathbb Z^n$ is infinite, then we can play the game as in {\bf Case 2} and obtain the proof. On the other hand, if $\mathcal A_r\cap \mathbb Q^n/\mathbb Z^n$ is finite, we may remove the finitely many rational points which will not affect our result. Therefore, we assume afterwards that \textcircled{2} $\mathcal A_r\cap \mathbb Q^n/\mathbb Z^n=\emptyset$. We assume also that \textcircled{3} $\{\mathbf q_i\}_{i\in\mathbb N}$ does not intersect any $\mathbb Q$-hyperplane $q_0+\mathbb Q^{r-1}$ ($q_0\in \mathbb Q^n$) with infinitely many points. Otherwise, we may get a case of $r-1$, from where we can start over again.

\begin{lemma}\label{tl2}
For any positive integer $\ell$, and $(b_1,\ldots,b_\ell)\in (\mathbb T^n)^\ell$ with $b_j\in \mathcal A_r$ for $1\le j\le \ell$, then the $\Gamma$ orbit closure of $(a_1,\ldots,a_r,b_1,\ldots,b_\ell)$ in $(\mathbb T^n)^{r+\ell}$ is a finite union of affine manifolds, and each one of the affine manifolds is the image of an affine map from $(\mathbb T^n)^r$ to $(\mathbb T^n)^{r+\ell}$. In particular, the dimension of the affine manifold is $nr$.
\end{lemma}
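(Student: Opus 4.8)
The plan is to reduce the computation of the orbit closure of $(a_1,\ldots,a_r,b_1,\ldots,b_\ell)$ to an orbit closure we already understand, namely that of $\mathbf a=(a_1,\ldots,a_r)$, which by Theorem \ref{pt2} is all of $(\mathbb T^n)^r$ (using that $a_1,\ldots,a_r$ are rationally independent by choice \textcircled{1}). The point is that, by the defining relations $b_j=q_j^0+\langle\mathbf q_j,\mathbf a\rangle$, the whole tuple $(a_1,\ldots,a_r,b_1,\ldots,b_\ell)$ lies on the graph of a single affine map $\Psi:(\mathbb T^n)^r\to(\mathbb T^n)^{r+\ell}$, defined by $\Psi(x_1,\ldots,x_r)=(x_1,\ldots,x_r,q_1^0+\langle\mathbf q_1,\mathbf x\rangle,\ldots,q_\ell^0+\langle\mathbf q_\ell,\mathbf x\rangle)$ — strictly speaking, after clearing denominators, $\Psi$ is only well-defined on a finite-index subtorus or up to a finite set of rational shifts, which is exactly why the conclusion allows a \emph{finite union} of affine manifolds rather than a single one. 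This $\Psi$ is $\Gamma$-equivariant because $\Gamma$ acts by the same automorphism in every coordinate and the $\mathbf q_j$ have rational (hence $\Gamma$-commuting up to denominators, by the centralizer structure) coefficients; more carefully, one checks that $\gamma\cdot\Psi(\mathbf x)$ and $\Psi(\gamma\mathbf x)$ differ by the action of $\gamma$ on the rational shift data, which stays inside the relevant finite set of rational points.

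Granting equivariance of $\Psi$, the argument runs as follows. First, since $\Psi$ is continuous and $\Gamma$-equivariant and $\overline{\Gamma\cdot\mathbf a}=(\mathbb T^n)^r$, we get $\overline{\Gamma\cdot\Psi(\mathbf a)}\subseteq\Psi((\mathbb T^n)^r)$; combined with the fact that $\Psi$ has closed image (it is a continuous map of compact spaces, indeed a finite-to-one affine map), we see the orbit closure is contained in a finite union of affine manifolds, each the translate of $\Psi_0((\mathbb T^n)^r)$ for the linear part $\Psi_0$ of $\Psi$. For the reverse inclusion — that the orbit closure is \emph{all} of (the relevant finite union of) these affine manifolds — I would invoke Theorem \ref{bq2-0}: the orbit closure of $(a_1,\ldots,a_r,b_1,\ldots,b_\ell)$ is itself a finite homogeneous union of affine submanifolds, and projecting to the first $r$ coordinates it surjects onto $\overline{\Gamma\cdot\mathbf a}=(\mathbb T^n)^r$ (again by $\Gamma$-equivariance of the projection and Theorem \ref{pt2}); since every point of the orbit closure satisfies the graph relations $b_j=q_j^0+\langle\mathbf q_j,\mathbf a\rangle$ (these are closed, $\Gamma$-stable conditions), the orbit closure is forced to fill out the whole graph over $(\mathbb T^n)^r$. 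Counting dimensions, $\Psi_0$ is injective on $(\mathbb T^n)^r$ up to finite kernel (the first $r$ coordinates already recover $\mathbf x$), so each affine piece has dimension exactly $nr$, giving the last sentence.

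The main obstacle I anticipate is the bookkeeping around denominators and the precise meaning of ``affine map $(\mathbb T^n)^r\to(\mathbb T^n)^{r+\ell}$'' when the coefficients $q_j^i$ are genuinely rational rather than integral: the naive formula for $\Psi$ is multivalued, and one must be careful that the ambiguity is exactly a finite set so that ``finite union of affine manifolds, each the image of an affine map'' is the honest statement. Concretely, I would fix a common denominator $N$ for all the $q_j^i$ and all the $q_j^0$, work on the finite cover, and then descend; the finiteness of $\mathcal A_r\cap\mathbb Q^n/\mathbb Z^n$ being empty (assumption \textcircled{2}) and the non-degeneracy assumption \textcircled{3} are what keep everything from collapsing to a lower-dimensional or infinite-index situation. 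A secondary technical point is justifying that the orbit closure really does project \emph{onto} $(\mathbb T^n)^r$ rather than merely into it — this needs that the first $r$ coordinates of the tuple are genuinely $(a_1,\ldots,a_r)$ with the orbit closure of that sub-tuple already known to be everything, which is assumption \textcircled{1} together with Theorem \ref{pt2}, so it should go through cleanly.
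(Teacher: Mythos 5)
Your proposal is correct and takes essentially the same route as the paper, whose proof is a one-line citation: Theorem~\ref{bq2-0} gives the finite-union-of-affine-submanifolds structure, Theorem~\ref{pt2} together with assumption \textcircled{1} forces the projection to the first $r$ coordinates to be onto $(\mathbb T^n)^r$, and the rational-dependence relations defining $\mathcal A_r$ cap each piece at dimension $nr$, so each connected component is the graph piece containing it. The only divergence is cosmetic: the paper's parametrization $\phi_h(\mathbf x)=(t^1x_1,\ldots,t^rx_r,h+\sum_j s^jx_j)$ clears denominators on the \emph{first} $r$ coordinates so that it is a genuine single-valued affine map from $(\mathbb T^n)^r$, whereas your $\Psi$ keeps the first $r$ coordinates as the identity and is therefore multivalued on the torus, an issue you flag but leave unresolved.
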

\begin{proof}
This follows from Theorem \ref{bq2-0}, Theorem \ref{pt2} and the assumption on $a_1,\ldots,a_r$ and $\mathcal A_r$.
\end{proof}

We now describe the affine map appeared above. Consider the point $a_k=q_k^0+\langle\mathbf q_k, \mathbf a\rangle$, let $q_k^j=\frac{s_k^j}{t_k^j}$, with $s_k^j,t_k^j\in\mathbb Z$ and $(s_k^j,t_k^j)=1$, $t_k^j\ge 1$. If $q_k^j=0$, then set $s_k^j=0$, $t_k^j=1$. Then the affine map $\phi_h: (\mathbb T^n)^r\to (\mathbb T^n)^r\times \mathbb T^n$ is defined by $$\phi_h(x_1,\ldots, x_r)=(t_k^1x_1,\ldots,t_k^rx_r,h+\sum_{j=1}^rs_k^jx_j),$$ where $h\in \{\Gamma.q_k^0\}$. The corresponding orbit closure of $(a_1,\ldots,a_r,a_k)$ is given by $$\bigcup_{h\in \{\Gamma.q_k^0\}}\phi_h((\mathbb T^n)^r).$$Next, if there is another point $a_l=q_l^0+\langle\mathbf q_l, \mathbf a\rangle$ with $q_l^j=\frac{s_l^j}{t_l^j}$. Then the affine map is defined by $$\phi_h(x_1,\ldots, x_r)=(t_1x_1,\ldots,t_rx_r,h+(\sum_{j=1}^r\bar s_k^jx_j,\sum_{j=1}^r\bar s_l^jx_j)),$$where $h\in \{\Gamma.(q_k^0,q_l^0)\}\subset(\mathbb T^n)^2$, $t_j=\frac{t^j_kt^j_l}{(t^j_k,t^j_l)}$, $\bar s^j_k=\frac{s_k^jt^j_l}{(t^j_k,t^j_l)}$ and $\bar s^j_l=\frac{s_l^jt^j_k}{(t^j_k,t^j_l)}$. The corresponding orbit closure of $(a_1,\ldots,a_r,a_k,a_l)$ is given by $$\bigcup_{h\in \{\Gamma.(q_k^0,q_l^0)\}}\phi_h((\mathbb T^n)^r).$$One can define similarly for the case when $\ell\ge 3$, which is even more complicated. We choose not to do the cumbersome work here but hope the construction is clear enough.

\begin{lemma}\label{tl3}
If $B\subset \mathcal A_r$ is an infinite subset, then for any open subset $V\subset (\mathbb T^n)^r$ and any open subset $U\subset\mathbb T^n$, there exist two points $\hat b,\bar b\in B$ such that 
\begin{itemize}
\item the orbit closure of $(a_1,\ldots,a_r,\hat b,\bar b)$ has non empty intersection with $(\mathbb T^n)^r\times U\times \mathbb T^n$;
\item the preimage of the intersection under the affine map has non empty intersection with $V$.\end{itemize}
\end{lemma}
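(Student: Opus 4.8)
The plan is to first re-express the two bullet points as a single concrete condition on the explicit affine maps described just after Lemma~\ref{tl2}, and then verify that condition by a dichotomy on the ``heights'' of the coefficient vectors $\mathbf q_i$: in the bounded regime I would invoke Theorem~\ref{bq2}(2) (distinct finite $\Gamma$-orbits accumulate on all of $\mathbb T^n$), and in the unbounded regime a divisibility trick that manufactures a large coefficient. Throughout I may shrink $U$ to an open ball and $V$ to a product $V_1\times\cdots\times V_r$ of open balls, since the asserted conclusion only becomes easier for smaller $U,V$.

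First I would do the reduction. Fix candidates $\hat b=a_k,\bar b=a_l\in B$ with $a_k\neq a_l$. By Lemma~\ref{tl2} and the formulas following it, the $\Gamma$-orbit closure $Z$ of $(a_1,\dots,a_r,a_k,a_l)$ is $\bigcup_{h\in\{\Gamma.(q_k^0,q_l^0)\}}\phi_h((\mathbb T^n)^r)$, where, writing $h=(h_1,h_2)$, the $(r{+}1)$-st coordinate of $\phi_h(x)$ is $h_1+\Psi_{k,l}(x)$ with $\Psi_{k,l}(x)=\sum_{j=1}^r\bar s_k^{\,j}x_j$ and $\bar s_k^{\,j}=s_k^{\,j}t_l^{\,j}/\gcd(t_k^{\,j},t_l^{\,j})$. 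Since every $\phi_h(x)$ already lies in $Z$, the slice $(\mathbb T^n)^r\times U\times\mathbb T^n$ constrains only this one coordinate, and the projection of $\{\Gamma.(q_k^0,q_l^0)\}$ to its first factor is exactly $\Gamma.q_k^0$; hence both bullets hold for $(\hat b,\bar b)$ precisely when $\Gamma.q_k^0\cap\bigl(U-\Psi_{k,l}(V)\bigr)\neq\emptyset$. By assumption \textcircled{2}, $\mathbf q_k\neq 0$ for every $a_k\in\mathcal A_r\supseteq B$ (otherwise $a_k=q_k^0\in\mathbb Q^n/\mathbb Z^n$), so some $s_k^{\,j_0}\neq 0$, whence $\bar s_k^{\,j_0}\neq 0$ and $\Psi_{k,l}$ is a surjective continuous homomorphism, in particular an open map; thus $\Psi_{k,l}(V)$ and therefore $U-\Psi_{k,l}(V)$ is a nonempty open subset of $\mathbb T^n$. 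So it remains only to choose $a_k,a_l\in B$ for which the finite set $\Gamma.q_k^0$ meets this nonempty open set.

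For the dichotomy, set $H(\mathbf q_i)=\max_j\max(|s_i^{\,j}|,t_i^{\,j})$. If $H(\mathbf q_i)$ is bounded over $a_i\in B$, then $\{\mathbf q_i:a_i\in B\}$ is finite, so infinitely many $a_k\in B$ share one value $\mathbf q^{*}=(s^{*j}/t^{*j})_j$; among these the $q_k^0$ are infinitely many distinct rationals, so $\{\Gamma.q_k^0\}$ is a sequence of distinct finite $\Gamma$-orbits converging to $\mathbb T^n$ by Theorem~\ref{bq2}(2). Choosing $\bar b=a_l$ in this same family forces $t_l^{\,j}=t^{*j}$, hence $\bar s_k^{\,j}=s^{*j}$ for all $j$, so $\Psi_{k,l}=\Psi^{*}$ is independent of $k,l$; then $U-\Psi^{*}(V)$ is a fixed nonempty open set, containing a ball of some radius $\delta_0$, and for all large $k$ in the family $\Gamma.q_k^0$ is $\delta_0$-dense, hence meets it, while $q_k^0\neq q_l^0$ for all but one $k$ gives $a_k\neq a_l$. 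If instead $H(\mathbf q_i)$ is unbounded, then after passing to a subsequence I may fix one $j_0$ with either $|s_k^{\,j_0}|\to\infty$, or all numerators bounded while $t_k^{\,j_0}\to\infty$. In the first case pick $\hat b=a_k$ with $|s_k^{\,j_0}|$ large and $\bar b$ any other point of $B$, giving $|\bar s_k^{\,j_0}|\ge|s_k^{\,j_0}|$ large; in the second case $q_k^{\,j_0}\neq0$, so $s_k^{\,j_0}\neq0$, and picking $\bar b=a_l$ with $t_l^{\,j_0}$ much larger than $t_k^{\,j_0}$ (possible since the $t_i^{\,j_0}$ are unbounded) gives $|\bar s_k^{\,j_0}|\ge t_l^{\,j_0}/t_k^{\,j_0}$ large, with $a_k\neq a_l$ since $t_k^{\,j_0}\neq t_l^{\,j_0}$. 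In either subcase, holding the coordinates $x_j$ ($j\neq j_0$) fixed inside $V_j$ and letting $x_{j_0}$ range over the ball $V_{j_0}$ shows $\Psi_{k,l}(V)\supseteq c+\bar s_k^{\,j_0}V_{j_0}$ for a constant $c$, and since multiplication by an integer of sufficiently large modulus maps any ball onto all of $\mathbb T^n$ (the covering radius of $\mathbb Z^n$ is finite), once $|\bar s_k^{\,j_0}|$ exceeds a constant depending only on the radius of $V_{j_0}$ we get $\Psi_{k,l}(V)=\mathbb T^n$, so $U-\Psi_{k,l}(V)=\mathbb T^n$ and any $h_1\in\Gamma.q_k^0$ works.

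The step I expect to require the most care is the reduction in the second paragraph: pinning down that the two bullets are equivalent to the single incidence $\Gamma.q_k^0\cap(U-\Psi_{k,l}(V))\neq\emptyset$, which means correctly reading off the $(r{+}1)$-st coordinate $h_1+\sum_j\bar s_k^{\,j}x_j$ from the (admittedly cumbersome) affine maps after Lemma~\ref{tl2} and checking that $\{\Gamma.(q_k^0,q_l^0)\}$ projects onto $\Gamma.q_k^0$. A lesser nuisance is the subcase where only denominators are unbounded, where the argument genuinely uses the freedom to choose $\bar b$ so as to create a large coefficient $\bar s_k^{\,j_0}$ from an otherwise bounded numerator; everything else is routine topology of tori plus the orbit-closure input already recorded in Theorem~\ref{bq2} and Lemma~\ref{tl2}.
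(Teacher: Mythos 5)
Your reduction to the single incidence condition $\Gamma.q_k^0\cap\bigl(U-\Psi_{k,l}(V)\bigr)\neq\emptyset$ is correct and matches the content of the paper's (rather cryptic) displayed formula, and the core mechanism --- force some $|\bar s_k^{\,j_0}|$ to be large so that $\Psi_{k,l}(V)$ already covers all of $\mathbb T^n$ --- is the same as in the paper. The difference is that the paper dispatches the remaining work in one sentence by invoking assumption~\textcircled{3} to conclude ``we can choose $a_l$ with some $t_l^j$ large,'' whereas you carry out an explicit dichotomy on the heights $H(\mathbf q_i)$. Your version is more careful in two respects. First, the paper's sentence implicitly identifies ``unbounded height'' with ``unbounded denominator,'' but unbounded heights can equally well come from unbounded numerators $|s_i^j|$ with bounded $t_i^j$; your subcase A handles exactly this and the paper does not. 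Second, you give a self-contained treatment of the bounded-height case via Theorem~\ref{bq2}(2), which means your proof of the lemma does not actually need assumption~\textcircled{3} at all (the paper leans on~\textcircled{3} precisely to avoid that case). That is a genuine strengthening of the argument, though in context the paper is free to assume~\textcircled{3}.

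Two small points to tidy up. Your phrase ``the asserted conclusion only becomes easier for smaller $U,V$'' has the inequality backwards --- shrinking $U,V$ makes the conclusion harder, which is exactly why it is legitimate to replace $U$ by a ball and $V$ by a product of balls inside them; the reduction is fine but the justification as written is inverted. And in the bounded case you should say a word about why the $\Gamma.q_k^0$ furnish infinitely many \emph{distinct} finite orbits (needed to apply Theorem~\ref{bq2}(2)): each such orbit is finite, so it can contain only finitely many of the pairwise distinct points $q_k^0$, hence infinitely many distinct orbits occur; passing to that subsequence fixes it. Neither issue is a real gap, just wording to firm up.
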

\begin{proof}
Since by assumption that $\mathcal A_r\cap \mathbb Q^n/\mathbb Z^n=\emptyset$, $B$ contains only irrational points. Pick any one of them, say $a_k=q_k^0+\langle\mathbf q_k, \mathbf a\rangle\notin\mathbb Q^n/\mathbb Z^n$. Then by Lemma \ref{tl2}, the orbit closure of $(a_1,\ldots,a_r,a_k)$ is a graph defined by some affine map $\phi:(\mathbb T^n)^r\to (\mathbb T^n)^r\times \mathbb T^n$, and must have nontrivial intersection with $(\mathbb T^n)^r\times U$ since the $\Gamma$ orbit closure of $a_k$ is $\mathbb T^n$. This intersection is open in the orbit closure because $U$ is open.

Now have the construction of affine maps in mind, the second assertion is equivalent to: for some $a_k$, there is a $a_l$ such that $$\exists \;(x_1,\ldots,x_r)\in V\text{, such that }\bigcup_{h\in \{\Gamma.(q_k^0,q_l^0)\}}\{h+(\sum_{j=1}^r\bar s_k^jx_j,\sum_{j=1}^r\bar s_l^jx_j)\}\subset U\times \mathbb T^n.$$As $V$ is open, this is true when $\max_j\{|\bar s_k^j|\}$ is large enough. By assumption \textcircled{3}, since $B$ is an infinite subset, we can choose an $a_l$ so that some $t_l^j$ is large enough (so $|\bar s^j_k|=\frac{|s_k^j|t^j_l}{(t^j_k,t^j_l)}$ is large enough). The proof is complete by making $\hat b=a_k$ and $\bar b=a_l$.
\end{proof}

\begin{lemma}\label{k4}
For any positive integer $\ell$, and any open subset $U\subset (\mathbb T^n)^{\ell}$, there exist $(b_1,\ldots,b_\ell)\in (\mathbb T^n)^\ell$ and $(c_1,\ldots,c_\ell)\in (\mathbb T^n)^\ell$ with $b_j,c_j\in \mathcal A_r$ for $1\le j\le \ell$, such that 
\begin{itemize}
\item the $\Gamma$ orbit closure of $(a_1,\ldots,a_r,b_1,\ldots,b_\ell,c_1,\ldots,c_\ell)$ in $(\mathbb T^n)^{r+2\ell}$ has non empty intersection with $(\mathbb T^n)^r\times U\times(\mathbb T^n)^\ell$;
\item the intersection is open when restricted in the orbit closure (affine submanifold).
\end{itemize}
\end{lemma}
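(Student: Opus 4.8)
The plan is to prove Lemma~\ref{k4} by induction on $\ell$, using Lemma~\ref{tl3} as the engine that adjoins one ``controlled'' coordinate $b_j$ (and an auxiliary coordinate $c_j$) at a time, and using Lemma~\ref{tl2} to keep track of the affine parametrizations of the orbit closures that arise.

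For $\ell=1$ one simply applies Lemma~\ref{tl3} with $B=\mathcal A_r$, $V=(\mathbb T^n)^r$, and the given open $U\subset\mathbb T^n$: it produces $\hat b,\bar b\in\mathcal A_r$ such that the orbit closure of $(a_1,\ldots,a_r,\hat b,\bar b)$ meets $(\mathbb T^n)^r\times U\times\mathbb T^n$ in a set that is open in the orbit closure because $U$ is open; set $b_1=\hat b$, $c_1=\bar b$. For the step from $\ell-1$ to $\ell$ it suffices to take $U=U_1\times\cdots\times U_\ell$. Applying the inductive hypothesis to $U_1\times\cdots\times U_{\ell-1}$ gives $b_1,\ldots,b_{\ell-1},c_1,\ldots,c_{\ell-1}\in\mathcal A_r$ and a component $W'$ of the $\Gamma$-orbit closure of $(a_1,\ldots,a_r,b_1,\ldots,b_{\ell-1},c_1,\ldots,c_{\ell-1})$ meeting $(\mathbb T^n)^r\times(U_1\times\cdots\times U_{\ell-1})\times(\mathbb T^n)^{\ell-1}$ in a nonempty set $O'$ open in $W'$. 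By Lemma~\ref{tl2}, $W'=\Phi((\mathbb T^n)^r)$ for an affine map $\Phi$ whose first $r$ coordinates are $(z_1,\ldots,z_r)\mapsto(T_1z_1,\ldots,T_rz_r)$; put $V=\Phi^{-1}(O')$, a nonempty open subset of $(\mathbb T^n)^r$. Now apply Lemma~\ref{tl3} to $B=\mathcal A_r\setminus\{b_i,c_i:i<\ell\}$ (still infinite), to this $V$, and to $U_\ell$, obtaining $\hat b,\bar b\in B$; set $b_\ell=\hat b$, $c_\ell=\bar b$.

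It remains to read off the statement for $\ell$ from these data. Let $W$ be the orbit closure of the full tuple; by Lemma~\ref{tl2} it is a finite union of affine $nr$-manifolds, and forgetting the two coordinates $b_\ell,c_\ell$ sends $W$ onto $W'$ while forgetting all but the coordinates $a_1,\ldots,a_r,b_\ell,c_\ell$ sends $W$ onto the orbit closure of $(a_1,\ldots,a_r,\hat b,\bar b)$, in both cases compatibly on the shared $(a_1,\ldots,a_r)$-block. Restricting to the component $\Psi((\mathbb T^n)^r)$ of $W$ through the base point, $\Psi$ agrees on these two blocks of coordinates with $\Phi$, respectively with the parametrization $\phi$ supplied by Lemma~\ref{tl3}, each precomposed with the integer ``diagonal'' self-covering of $(\mathbb T^n)^r$ that records the passage from the denominators $T_j$, resp.\ from those attached to $(\hat b,\bar b)$, to their least common multiples. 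Hence the requirement ``$b_1,\ldots,b_{\ell-1}\in U_1,\ldots,U_{\ell-1}$ and $b_\ell\in U_\ell$'' translates, in the parameter $z$, into membership of $z$ in a covering preimage of $V$ and simultaneously in a covering preimage of the open set $\phi^{-1}(O_\ell)$ that Lemma~\ref{tl3} produces and that is already known to meet $V$. The point of having chosen $\bar b$ (via Lemma~\ref{tl3}, which lets one force the relevant denominators, or equivalently the coefficients $\bar s^j_k$, to be large) is exactly that then one of these two covering preimages becomes a union of enough small translates of a fixed open set to be dense, and therefore to meet the other; here assumption \textcircled{3} together with $\#\mathcal A_r=\infty$ guarantees the needed large denominators in $\mathcal A_r$. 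Such a $z$ yields a point of $W$ in $(\mathbb T^n)^r\times U\times(\mathbb T^n)^\ell$, and since it is cut out by open conditions the intersection is open in $W$, completing the induction.

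The step I expect to be the main obstacle is this last assembly. The orbit closures of the full tuple and of its two relevant sub-tuples clear denominators by different integers, so the two open subsets of the parameter torus $(\mathbb T^n)^r$ that one must intersect are seen through different finite self-coverings of $(\mathbb T^n)^r$; turning the single intersection provided by Lemma~\ref{tl3} into an intersection of these two preimages forces one to use the quantitative ``large auxiliary denominator $\Rightarrow$ dense preimage'' mechanism --- the same idea already used inside the proof of Lemma~\ref{tl3} when $\max_j|\bar s^j_k|$ is made large. Verifying the compatibility of the several affine parametrizations, and checking that the bookkeeping of the finitely many rational translates matches up on the component through the base point, is routine but a little tedious.
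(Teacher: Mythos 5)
Your proposal follows exactly the same inductive scheme as the paper: reduce to a product open set $U=U_1\times\cdots\times U_\ell$; base case is Lemma~\ref{tl3}; for the inductive step pull the previously obtained open intersection back through the affine parametrization of Lemma~\ref{tl2} to get an open $V\subset(\mathbb T^n)^r$, and feed $V$, $U_\ell$, and $B=\mathcal A_r\setminus\{b_i,c_i\}$ into Lemma~\ref{tl3} to produce $b_\ell=\hat b$, $c_\ell=\bar b$. Up to that point you are word-for-word the paper's argument.

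Where you go beyond the paper is in the final ``assembly'' paragraph, and you are right to single it out: this is precisely the step the paper leaves unjustified. The subtlety you identify is real. Lemma~\ref{tl3}'s second bullet produces an $x_0\in V$ with $\phi_h(x_0)$ landing in the $U_\ell$-constraint, but $\phi$ is the affine map for the orbit closure of $(a_1,\ldots,a_r,\hat b,\bar b)$ with its own denominators $T''_j$, while $V$ was defined as $\Phi^{-1}(\cdot)$ for $\Phi$ the parametrization of the $(\ell-1)$-stage orbit closure with denominators $T'_j$, and the full orbit closure has yet a third parametrization $\Psi$ with denominators $T_j=\mathrm{lcm}(T'_j,T''_j)$. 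The two constraints therefore live on the parameter torus only after applying the two distinct diagonal coverings $p_1\colon z_j\mapsto (T_j/T'_j)z_j$ and $p_2\colon z_j\mapsto (T_j/T''_j)z_j$, and what is needed is $p_1^{-1}(V)\cap p_2^{-1}(V_0)\neq\emptyset$, which does \emph{not} follow from $V\cap V_0\neq\emptyset$ (the map $z\mapsto(p_1(z),p_2(z))$ is generically injective, so the diagonal of $V\times V_0$ need not be hit). The paper's proof asserts ``then $(b_1,\ldots,b_k)$ and $(c_1,\ldots,c_k)$ satisfies the lemma'' without addressing this.

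Your proposed repair --- make one of the covering preimages dense by choosing $\bar b$ with large denominators --- is, however, not carried out, and as stated it does not obviously close the gap. The index $T_j/T''_j = T'_j/\gcd(T'_j,T''_j)$ is bounded above by $T'_j$, so you cannot make $p_2^{-1}(V_0)$ dense by raising it. You can make $T_j/T'_j$ large by picking $t_{\bar b}^j$ large and coprime to $T'_j$, so $p_1^{-1}(V)$ becomes an $\epsilon$-dense union of balls with $\epsilon\sim T'_j/T_j$; but simultaneously the coefficients $\bar s^j_{\hat b}$ in $\phi$ grow, so $V_0$ fragments into thinner strips, and the balls inside $p_2^{-1}(V_0)$ shrink at a comparable rate. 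The two effects fight, and without a quantitative estimate there is no reason the two sets intersect. So the assembly step is a genuine gap in your proposal --- and, to be fair, it is the same gap the paper itself leaves open; a correct closing of it would need either a strengthening of Lemma~\ref{tl3} stated directly against the parametrization of the full tuple, or an explicit arithmetic compatibility (e.g.\ imposing divisibility relations among the chosen $t_{\hat b}^j, t_{\bar b}^j$ and $T'_j$) that makes $p_1$ or $p_2$ the identity.
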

\begin{proof}
Firstly, note that by the assumption on $a_1,\ldots,a_r$, the $\Gamma$ orbit closure of $(a_1,\cdots, a_r)$ is $(\mathbb T^n)^r$. Next, as in the previous two lemmas, it suffices to prove the case when $U=U_1\times \cdots\times U_\ell$, where $U_i\subset \mathbb T^n$ is an open subset. We prove this by induction on $\ell$.

When $\ell=1$, this is the content of Lemma \ref{tl3}.

Assume that when $\ell=k-1\ge 1$, the lemma is true. Now we prove it for $\ell=k$. Let $U=U_1\times \cdots\times U_k$ be an arbitrary open set. Apply the case $\ell=k-1$ for the first $k-1$ product $U_1\times \cdots\times U_{k-1}$, and let $W$ be the intersection resulted. By Lemma \ref{tl2}, $W$ is the intersection of the image of an affine map with $(\mathbb T^n)^r\times U_1\times \cdots\times U_{k-1}\times(\mathbb T^n)^{k-1}$. Let $V\subset (\mathbb T^n)^r$ be the preimage. Since $W$ is open in the orbit closure, it follows that $V$ is an open set of $(\mathbb T^n)^r$. Now apply Lemma \ref{tl3} for $V$, $U_k$ and $B=\mathcal A_r\backslash \{b_1,\ldots, b_{k-1},c_1,\ldots,c_{k-1}\}$, we have two points $\hat b$ and $\bar b$ satisfying that 
\begin{itemize}
\item the orbit closure of $(a_1,\ldots,a_r,\hat b,\bar b)$ has non empty intersection with $(\mathbb T^n)^r\times U_k\times \mathbb T^n$;
\item the preimage of the intersection under the affine map has non empty intersection with $V$.
\end{itemize}
Let $b_k=\hat b$ and $c_k=\bar b$, then $(b_1,\ldots,b_k)$ and $(c_1,\ldots,c_k)$ satisfies the lemma. Hence the induction is complete and the proof is done.
\end{proof}

Continue the proof of {\bf Case 3}. Let $\pi_\ell$ be the map from $(\mathbb T^n)^\ell$ to $\mathcal K(\mathbb T^n)$, the space of subsets of $\mathbb T^n$, defined by $\pi_\ell(z_1,\ldots,z_\ell)=\{z_1,\ldots,z_\ell\}$. For any $\epsilon>0$, let $\ell$ be large enough, then there exists an open subset $U\subset (\mathbb T^n)^{\ell}$, such that the subset $\pi_\ell(z_1,\ldots,z_\ell)$ is $\epsilon$-dense for any $(z_1,\ldots,z_\ell)\in U$. Apply Lemma \ref{k4} with the $\ell$ and $U$, there exists a $\gamma\in \Gamma$ such that $\gamma A$ is $\epsilon$-dense. The proof is complete.


\text{\quad}\\

\textsc{Department of Mathematics, The Pennsylvania State University, University Park, PA 16802, USA}

{Email: cud159@psu.edu}

\end{document}